\documentclass[reqno]{amsart}
\usepackage{amsfonts,amsmath,amsthm,amssymb,latexsym, cite}
\usepackage{graphicx}
\usepackage[colorlinks,plainpages,citecolor=red, linkcolor=blue,bookmarksnumbered]{hyperref}

\theoremstyle{plain}
\newtheorem{thms}{Theorem}
\newtheorem{thm}{Theorem}[section]
\newtheorem{defn}[thm]{Definition}

\newtheorem{lem}[thm]{Lemma}
\newtheorem{rem}[thm]{Remark}

\newtheorem{prop}[thm]{Proposition}

\numberwithin{equation}{section}
\newcommand{\dom}{\mathop{\rm dom}}
\renewcommand{\Re}{\mathop{\rm Re}}
\renewcommand{\Im}{\mathop{\rm Im}}
\newcommand{\supp}{\mathop{\rm supp}}

\renewcommand{\kappa}{\varkappa}

\newcommand{\Real}{\mathbb R}
\newcommand{\Comp}{\mathbb C}

\newcommand{\eps}{\varepsilon}

\newcommand{\cI}{\mathcal{I}}
\newcommand{\fpr}[1]{{#1}^{(-1)}}
\newcommand{\spr}[1]{{#1}^{(-2)}}
\newcommand{\ra}{\rangle}
\newcommand{\la}{\langle}

\renewcommand{\phi}{\varphi}

\newcommand\xe{\left(\tfrac x\eps\right)}

\newcommand\se{\left(\tfrac s\eps\right)}

\newcommand{\scrp}{\scriptstyle}

\usepackage {tikz}
\usetikzlibrary {positioning}
\definecolor {processblue}{cmyk}{0.96,0,0,0}
\newcommand{\Graph}{
\begin {tikzpicture}[-latex , auto , node distance =2 cm and 3.1 cm ,on grid, thin,
state/.style ={circle, top color =white, bottom color = processblue!20,
draw, processblue, text=black, minimum width =0.3 cm}]
\node[state] (T) {$H_\eps$};
\node[state] (B3) [below left =of T] {B3};
\node[state] (A2) [below=of B3]        {A2};
\node[state] (B2) [below  = of A2] {B2};
\node[state] (B1) [below  =of B2] {B1};
\node[state] (A1)[right =of B1] {A1};
\node[state] (E20) [right  =of A1] {};
\node[state] (N2)[below =of T] {};
\node[state] (N1) [right=of N2] {};
\node[state] (E1) [  right =of   N1] {};
\node[state] (X) [above =of   E1] {x};
\node[state] (E11) [below =of  E1] {};
\node[state] (A3) [below =of E11] {A3};
\node[state] (B31)[ below   =of A3]  {B3};
\node[state] (N3)[ right=of A2] {};
\node[state] (E21) [right =of B2]  {};
\node[state] (E2) [right =of  N3] {};
\node[state] (E22) [ right  =of   E21] {};

\path (T)   edge [bend right=10]    node[ right=0.2 cm]
  {$\scrp\lambda\neq 0$}    (B3);

\path (T)   edge [bend left =10]   node[ left  =0.4 cm]
 {$\scrp\lambda=0$}        (E1);

\path (E1)  edge [bend right =10]  node[above left =0.0 cm]  {$ |f_0|+|g_0|\neq0,\atop f_0 g_0=0$}    (X);

\path (E1)  edge [bend left =20]  node[above  =0 cm]
{$\scrp f_0g_0\neq 0$}    (N1);

\path (E1)  edge [bend right =15]  node[left =0 cm]
{$f_0=0,\atop g_0=0$}    (E11);

\path (N1) edge [bend right =10]  node[above  =0. cm]
{$\scrp f_0g_1=f_1g_0$}   (N2);

\path (N2) edge [bend left =10]  node[above  =0. cm]
{$\scrp \sigma_+=0$}   (B3);

\path (N2) edge [bend left =10]  node[above left =0. cm]
{$\scrp \sigma_+\neq0$}   (A2);

\path (N1) edge [bend left =15]   node[above left =-0.1 cm]
{$\scrp f_0g_1\neq f_1g_0$}      (N3);

\path (N3) edge [bend left =10]  node[above =0. cm]
{$\scrp \sigma_-\sigma_+ \neq0$}   (A2);

\path (N3) edge [bend left =10]  node[left =0. cm]
{$\scrp \sigma_-\sigma_+ =0$}   (B2);

\path (E11) edge [bend left=10]   node[ left =0.0 cm]
{$\scrp \pi \neq 0$}    (A3);

\path (E11) edge [bend right =20]  node[below =0. cm]
 {$\scrp \pi=0$}        (E2);

\path (E2)  edge [bend left=10]   node[ below =0.15 cm]
  {$\scrp \kappa\neq 0$}      (E21);

\path (E2)  edge [bend left =10]   node[ right =0. cm]
        {$\scrp \kappa=0$}         (E22);

\path (E21) edge [bend right=10]   node[ below   =0.3 cm]
   {$\scrp a_2=\overline{\kappa} a_1$}(B1);

\path (E21) edge [bend right =20]   node[  right   =-0.05 cm]
   {$\scrp a_2\neq \overline{\kappa} a_1$}    (A1);

\path (E22) edge [bend right=10]    node[below right =-0.1 cm]
    {$\scrp a_2\neq 0$}           (A1);

\path (E22) edge [bend right =15]  node[right=0. cm]
        {$\scrp a_2=0$}               (E20);

\path (E20)  edge [bend left =10]  node[  right   =0.15 cm]
  {$\scrp a_1=0$}               (A3);

\path (E20)  edge [bend right =15]   node[above =0. cm]
   {$\scrp a_1\neq 0$}           (B31);
\end{tikzpicture}
}

\begin{document}

\title[Schr\"{o}dinger operators with singular rank-two perturbations]
{Schr\"{o}dinger operators with singular rank-two perturbations and  point interactions}

\author{Yuriy Golovaty}%
\address{Department of Mechanics and Mathematics,
  Ivan Franko National University of Lviv\\
  1 Universytetska str., 79000 Lviv, Ukraine}
\curraddr{}
\email{yu\_\,holovaty@franko.lviv.ua}

\subjclass[2000]{Primary 34L40, 34B09; Secondary  81Q10}

\begin{abstract}
  Norm resolvent approximation for a wide class of point interactions in one dimension is constructed. To analyse the limit behaviour of  Schr\"{o}dinger operators with localized singular rank-two perturbations coupled with $\delta$-like potentials as the support of perturbation shrinks to a point, we show that the set of  limit operators is quite rich. Depending on parameters of the perturbation, the limit operators are described by both the connected and separated boundary conditions. In particular  an approximation for a four-parametric subfamily of all the connected point interactions is built.
 We give examples of the singular perturbed Schr\"{o}dinger operators without loca\-li\-zed gauge fields, which converge to point interactions with the non-trivial phase parameter. We also construct an approximation for the point interactions that are described by different types of the separated boundary conditions such as the Robin-Dirichlet, the Neumann-Neumann or the Robin-Robin types.
\end{abstract}

\keywords{1d Schr\"{o}dinger operator, point interaction, $\delta'$-potential, $\delta'$-interaction, solvable model, finite rank perturbation, scattering problem}
\maketitle


\section{Introduction}

Solvable Schr\"odinger type operators  have attracted considerable attention both in the physical and mathematical literature in recent years. Such the operators are of interest in applications of mathematics in different fields of science and engineering. The so-called solvable models that are based upon the concept of point interactions also often appear in quantum theory and allow us to calculate explicitly spectral characteristics of systems such as eigenvalues, eigenfunctions or scattering data. The  Schr\"odinger operators  with singular distributional potentials supported on  discrete sets  reveal an unquestioned effectiveness whenever the exact solvabi\-li\-ty together with non trivial description of the actual process is required.
It is an extensive subject with a large literature, see the books by Albeverio, Gesztesy, H{\o}egh-Krohn, and Holden~\cite{Albeverio2edition} and  Albeverio and Kurasov~\cite{AlbeverioKurasov} discussing point interactions and more general singular perturbations of the  Schr\"odinger operators and the extensive bibliography lists therein.


In spite of all advantages of the solvable models, they  give rise to many ma\-the\-matical difficulties. One of the main difficulty deals with the problem of defining a  multiplication of distributions.
It entails that many Schr\"{o}dinger operators with singular potentials are often only formal differential expressions without a precise mathematical meaning. We cite two linear differential equations with distributions contained in the coefficients  as an example.

First let us consider the  model of the  Schr\"{o}dinger equation $-y''+\delta(x)y=k^2y$ with the $\delta$ potential. Here $\delta$ is the Dirac delta-function and $\delta(x)y(x)=y(0)\delta(x)$. It can be also written in the form $-y''+\langle\delta(x),y\rangle\,\delta(x)=k^2y$.
It is well-known that both the equations have the same $2$-dimensional space of solutions in the sense of distributions. All the solutions are continuous  at the origin and therefore the product $\delta(x)y(x)$ and the value  $\langle\delta(x),y\rangle$ are well-defined. Both the  differential expressions $-\frac{ d^2}{ d x^2} + \delta(x)$ and $-\frac{ d^2}{ d x^2} + \langle\delta(x),\,\cdot\,\rangle\,\delta(x)$ could be interpreted as the same self-adjoin operator in $L_2(\Real)$, defined by $Sy=-y''$
on functions  in $W^2_2(\Real\setminus\{0\})$ obeying the interface conditions $y(-0)=y(+0)$, $y'(+0)-y'(-0)=y(0)$.

At the same time, the  equation $-y''+\delta'(x)y=k^2y$
with the first derivative of the Dirac delta-function as a potential has no  mathe\-ma\-ti\-cal sense, because for it no solution exists in the space of distributions, except the trivial one.
Indeed, the product $\delta'(x) y=y(0)\delta'(x)-y'(0)\delta(x)$ is well defined for  $y$ that is continuously differentiable at the origin. But this is impossible for a non-trivial solution, because its second derivative  is the singular distribution $y(0)\delta'(x)-y'(0)\delta(x)+k^2y$. The equation
$
-y''+\langle \delta(x),y\rangle\, \delta'(x)+\langle \delta'(x),y\rangle\, \delta(x)=k^2y
$, in which potential $\delta'(x)$ is treated as the rank-two perturbation, is also meaningless.

Hence the situation is more obscure with definition of the Schr\"odinger operators with  potential $\delta'$, and one must be careful in using the formal differential expressions
\begin{align}\label{PseudoH1}
  -&\frac{ d^2}{ d x^2} + \alpha\delta'(x)+ \beta\delta(x), \\\label{PseudoH2}
  -&\frac{d^2}{dx^2}+\alpha\big(\langle \delta'(x),\,\cdot\,\rangle \,\delta(x)
  +\langle \delta(x),\,\cdot\,\rangle\, \delta'(x)\big)+\beta \delta(x).
\end{align}
However, such the pseudo-Hamiltonians often appear in the models of quantum devices with  barrier-well junctions. To get round  the problem  of multiplication of distributions, we can regularize $\delta'$  by smooth enough localized potentials  and then investigate  convergence of the Schr\"odinger operators with the regular potentials.
The main goal is to find the limit operator  and  assign for the quantum system a solvable model (i.e., a point interaction) that governs the  quantum process of the true interaction with adequate accuracy. Note that such the results depend on shapes of the approximation sequences. From a physical point of view, this means that there are many different ``$\delta'$ potentials'', namely, the quantum devices with  $\delta'$-like potentials of various  shapes exhibit the different properties.

The Schr\"{o}dinger operators with $(\alpha\delta'+\beta\delta)$-like potentials, i.e., the regularization of the pseudo-Hamiltonian   \eqref{PseudoH1}, was studied in
\cite{GolovatyMankoUMB, GolovatyHrynivJPA2010, GolovatyMFAT2012, GolovatyHrynivProcEdinburgh2013,GolovatyIEOT2013}.  The norm resolvent convergence of the corresponding families of operators  was established and a class of solvable models that
approximate the quantum systems with barrier-well junctions was  obtained. The result of \cite{SebRMP} about the regularization of $\delta'$-potential was revised and adjusted in \cite{GolovatyHrynivJPA2010}.
Different families  of Schr\"{o}dinger operators with potentials of the dipole type using a regularization by rectangles in the form of a barrier and a well were treated by Zolotaryuk (partly  with coauthors) in   \cite{ChristianZolotarIermak03, Zolotaryuk08,Zolotaryuk10, Zolotaryuks11, ZolotaryukThreeDelta17}.

In this paper we study families of Schr\"{o}dinger operators with localized singular rank-two perturbations coupled with $\delta$-like potentials. These operators can be regarded as the regularization of the pseudo-Hamiltonian  \eqref{PseudoH2}, but only in a special case.
A careful analysis actually shows that the families  describe
a variety of quantum interactions and the set of all limit operators, which can be obtained in the norm resolvent topology as the support of perturbation shrinks to the origin, contains a wide class of point interactions. The limit operators are described by both the connected and separated boundary conditions. In the first case, we obtained the approximation for a four-parametric subfamily of all the connected point interactions with a complete matrix in the boundary conditions.
Moreover an unexpected fact is that the point interactions with non-trivial phase parameter appear in the limit, although   the perturbed Schr\"{o}dinger operators contain no localized magnetic field.
We also constructed an approximation for point interactions that are described by different types of the separated boundary conditions such as the Robin-Dirichlet, the Neumann-Neumann or the Robin-Robin types.
A partial case of the problem has been recently published in \cite{Golovaty2018}.

Problems of this nature have a long history and the literature
on approximation for point interactions as well as finite rank perturbations of the Schr\"{o}dinger ope\-ra\-tors is extensive.  Among all zero range  interactions, the $\delta'$-interactions, along with $\delta$ and $\delta'$ potentials, are most studied in this kind of research. We want to especially note the paper \cite{AlbeverioNizhnik2000,CheonShigehara1998, CheonExner2004,  ExnerManko2014, ExnerNeidhardtZagrebnov2001, NizhFAA2003, SebRMP} and the references therein. This special case has attracted much attention recently
\cite{AlbeverioFassariRinaldi2013, AlbeverioFassariRinaldi2015, Lange2015, ZolotaryukThreeDelta17}.
Many authors have dealt with finite rank perturbations and their relationship with the point interactions. In particular we mention  papers on singular finite rank perturbations and nonlocal potentials
\cite{AlbeverioKoshmanenkoKurasovNizhnik2002, AlbeverioNizhnik2000, AlbeverioNizhnik2007, AlbeverioNizhnik2013, HassiKuzhel2009, KuzhelZnojil, Nizhnik2001, KuzhelNizhnik2006, KoshmanenkoUMZh1991}.

\section{Statement of Problem, Main Results and Discussion}
From now on, the scalar product and norm in $L_2(\Real)$ will be denoted by $\la\cdot,\cdot\ra$ and $\|\cdot\|$ respectively.
Let us consider the Schr\"{o}dinger operator
\begin{equation*}
  H_0=-\frac{d^2}{dx^2}+V(x)
\end{equation*}
in $L_2(\Real)$,  where potential $V$ is  real-valued, measurable and locally bounded. We also assume that $V$ is bounded from below in $\Real$. Let $f$ and $g$ be complex-valued and compactly supported functions in $L_2(\Real)$ that are linearly independent.
We  denote by $Q_\eps$ the rank-two operators
\begin{multline*}
  (Q_\eps v)(x)=\la g(\eps^{-1}\,\cdot),v\ra \,f(\eps^{-1}x)
  +\la f(\eps^{-1}\,\cdot),v\ra\, g(\eps^{-1}x)\\
  =\int_\Real \left(\bar{g}(\eps^{-1}s)f(\eps^{-1}x)
  +\bar{f}(\eps^{-1}s)g(\eps^{-1}x)\right)v(s)\,ds
\end{multline*}
acting in $L_2(\Real)$. Let us  consider the family of self-adjoint operators
\begin{equation*}
 H_\eps= H_0+\eps^{-3}Q_\eps+\eps^{-1}q(\eps^{-1} x),
\end{equation*}
where $q$ is an integrable real-valued bounded function of compact support. Since the perturbation of $H_0$ has a compact support, we have  $\dom H_\eps=\dom H_0$.

One of the questions of our primary interest in this paper is to understand the limiting behavior of the operators $H_\eps$ as the small positive parameter $\eps$ goes to zero, i.e., as  the support of perturbation shrinks to the origin.
An asymptotic  analysis of $H_\eps$ leads us to a few cases of norm resolvent limits.  This limiting behaviour is governed primarily by  $f$ and $g$ as well as their interaction with the potential $q$.

We introduce  notation
\begin{equation*}
  f_0=\int_\Real f\,dx,\quad g_0=\int_\Real g\,dx,\quad f_1=\int_\Real x  f\,dx,\quad g_1=\int_\Real x g\,dx.
\end{equation*}
Let us denote by $h^{(-1)}(x)=\int_{-\infty}^x h(s)\, ds$ and $h^{(-2)}(x)=\int_{-\infty}^x (x-s)h(s)\, ds$
the first and second  antiderivatives of a function $h$. The antiderivatives are well-defined for measurable functions of compact support, for instance. In addition, if $h$ has zero mean, then $h^{(-1)}$ is also a function of compact support. We will henceforth use notation
\begin{gather}
\label{Pi}
    \pi=
    \|\fpr{f}\|\cdot\|\fpr{g}\|-|\la \fpr{f},\fpr{g}\ra+1|,
\\\nonumber
    \omega=e^{i\arg(\la \fpr{f},\fpr{g}\ra+1)}\|\fpr{g}\|\, \spr{f}-\|\fpr{f}\|\, \spr{g},
\\\label{Ak}
    a_0=\int_\Real q\,dx,\quad a_1=\int_\Real q\,\omega\,dx,\quad a_2=\int_\Real q\,|\omega|^2\,dx.
\end{gather}
that will be correct only if   $f$ and $g$ have zero means, i.e., $f_0=0$ and $g_0=0$. In this case, $\omega$ is  constant outside  some interval that contains the supports of $f$ and $g$. We write
\begin{equation*}
  \kappa=\lim_{x\to+\infty}\omega(x).
\end{equation*}
Of course,  $\lim_{x\to-\infty}\omega(x)=0$.
We  also set
\begin{gather}
\nonumber
\lambda=\|g_0\fpr{f}-f_0 \fpr{g}\|^2-2\Re{(f_0\bar{g}_0)},
\\\nonumber
  \sigma=|g_0|^2\left(\bar{f}_0\spr{f}-\la f,\spr{f}\ra\right)-
       |f_0|^2\left(\bar{g}_0\spr{g}-\la g,\spr{g}\ra\right),
 \\\label{sigmapm}
       \sigma_-=\lim_{x\to-\infty}\sigma(x), \quad \sigma_+=\lim_{x\to+\infty}\sigma(x), \quad
       \sigma_*=\int_\Real q|\sigma|^2\,dx.
\end{gather}
Remark that  $g_0\fpr{f}-f_0 \fpr{g}$ belongs to $L_2(\Real)$, because $g_0f-f_0g$ is a function of zero mean. The limits $\sigma_-$ and $\sigma_+$ also exist (to be proved later in Lemma~\ref{LemmaHBS}).

Let us introduce the subspace $\mathcal{V}\subset L_2(\Real)$ as follows.
We say that $h$ belongs to $\mathcal{V}$ if there exist two functions $h_-$ and $h_+$ belonging to $\dom H_0$ such that $h(x)=h_-(x)$ if $x<0$ and $h(x)=h_+(x)$ if $x>0$.
Let $(c_{ij})$ be a square matrix of order $2$ with real elements and $\phi\in\Real$.
We denote by $\mathcal{H}$ the operator defined by
\begin{equation*}
\mathcal{H}\,v=-\frac{d^2 v}{dx^2}+V(x)v
\end{equation*}
on functions $v\in \mathcal{V}$  obeying the coupling conditions
\begin{equation}\label{ConnectedCond}
 \begin{pmatrix} v_+ \\ v'_+ \end{pmatrix}
= e^{i\phi}
\begin{pmatrix}
  c_{11} &  c_{12}\\
  c_{21} &  c_{22}
\end{pmatrix}
\begin{pmatrix} v_- \\ v'_- \end{pmatrix}.
\end{equation}
at the origin. Here $v_-=v(-0)$, $v_+=v(+0)$ and $i^2=-1$.  Remark that  $\mathcal{H}$ is self-adjoint if and only if $c_{11}c_{22}-c_{12}c_{21}=1$.

The following theorem collects the cases  of  limiting behaviour of $H_\eps$ in which the limit operators describe non-trivial point interactions; these cases are of special interest in the scattering theory. Under a non-trivial point interaction we understand
the point interaction that describes by boundary conditions \eqref{ConnectedCond}.

\begin{thms}\label{ThmC}
Let $f, g\colon \Real\to\Comp$ and $q\colon \Real\to\Real$ be integrable functions of compact support, and $f$ and $g$ are linearly independent in $L_2(\Real)$.
\begin{itemize}
\setlength\itemsep{1em}
  \item[A1.] If $f_0=0$, $g_0=0$, $\pi=0$ and $a_2\neq\overline{\kappa} a_1$, then operators
$H_\eps$  converge in the norm resolvent sense as $\eps\to 0$  to the operator $\mathcal{H}$ with   coupling conditions
\begin{equation}\label{MatrixHinC1}
  \begin{pmatrix} v_+ \\ v'_+ \end{pmatrix}
=
e^{i \arg (a_2-\kappa \bar{a}_1)}\begin{pmatrix}
\dfrac{|\kappa|^2a_0-2{\rm Re}(\overline{\kappa} a_1)+a_2}
{|a_2-\overline{\kappa} a_1|}    &    \dfrac{|\kappa|^2}{|a_2-\overline{\kappa} a_1|}\\
      \dfrac{a_0a_2-|a_1|^2}{|a_2-\overline{\kappa} a_1|}  &      \dfrac{a_2}{|a_2-\overline{\kappa} a_1|}
  \end{pmatrix}
  \begin{pmatrix} v_- \\ v'_- \end{pmatrix}.
\end{equation}
  \item[A2.] Suppose  $\lambda=0$, $f_0g_0\neq 0$ and $\sigma_- \sigma_+\neq 0$,
      then  $\sigma_+$ is a real number and $H_\eps$ converge  to operator $\mathcal{H}$  in the norm resolvent sense, where $\dom\mathcal{H}$ consists of functions $v\in\mathcal{V}$ such that
      \begin{equation}\label{MatrixHinA2}
  \begin{pmatrix} v_+ \\ v'_+ \end{pmatrix}
=
e^{-i \arg \sigma_-}\begin{pmatrix}
\dfrac{\sigma_+}{|\sigma_-|}    &    0\\
      \dfrac{\sigma_*}{\sigma_+|\sigma_-|}  &      \dfrac{|\sigma_-|}{\sigma_+}
  \end{pmatrix}
  \begin{pmatrix} v_- \\ v'_- \end{pmatrix}.
\end{equation}

\item[A3.] Assume  $f_0=0$, $g_0=0$ and one of the following conditions holds
   \begin{itemize}
   \item[$\circ$] $\pi\neq 0$;
   \item[$\circ$] $\pi=0$,  $\kappa=0$, $a_1=0$ and $a_2=0$.
 \end{itemize}
Then  resolvents of $H_\eps$ converge in norm as $\eps\to 0$ to the resolvent of opera\-tor $\mathcal{H}$ with  coupling conditions
  \begin{equation}\label{MatrixHinC2}
   \begin{pmatrix} v_+ \\ v'_+ \end{pmatrix}
   =
   \begin{pmatrix}
    1     &    0\\
    a_0 & 1
  \end{pmatrix}
  \begin{pmatrix} v_- \\ v'_- \end{pmatrix}.
\end{equation}
\end{itemize}
\smallskip
\end{thms}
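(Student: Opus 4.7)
The plan is to treat all three cases within a single framework of matched asymptotic expansions. Fix $\zeta=-\mu^2$ with $\mu$ large, let $\phi\in L_2(\Real)$ and set $\psi_\eps=(H_\eps-\zeta)^{-1}\phi$. Since the perturbation $\eps^{-3}Q_\eps+\eps^{-1}q\xe$ is supported in some interval $[-R\eps,R\eps]$, the restriction of $\psi_\eps$ to $\{|x|>R\eps\}$ satisfies $(H_0-\zeta)\psi_\eps=\phi$, so the only data transmitted across $0$ are the boundary limits $v_\pm=\psi_\eps(\pm 0)$, $v'_\pm=\psi_\eps'(\pm 0)$; the theorem amounts to identifying the limiting transfer map from $(v_-,v'_-)$ to $(v_+,v'_+)$. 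For the inner problem I rescale by $\xi=x/\eps$, $u(\xi)=\psi_\eps(\eps\xi)$; multiplying the equation by $\eps^2$ and using $\la g\xe,\psi_\eps\ra=\eps\la g,u\ra$ yields
\begin{equation*}
  -u''+\la g,u\ra\, f+\la f,u\ra\, g+\eps q(\xi)u
  =\eps^2\bigl(\phi(\eps\xi)+\zeta u-V(\eps\xi)u\bigr).
\end{equation*}
Integrating the leading-order equation twice gives $u(\xi)=A+B\xi-\la g,u\ra\spr{f}(\xi)-\la f,u\ra\spr{g}(\xi)$, and projecting this representation against $f$ and $g$ produces a $2\times 2$ linear system for the scalars $\la f,u\ra,\la g,u\ra$; its determinant is proportional to $\pi$ when $f_0=g_0=0$ and to $\lambda$ when $f_0g_0\neq 0$. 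Whether this system is invertible or singular dictates the case split of the theorem.

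With this dichotomy the three cases unfold as follows. In case A3 with $\pi\neq 0$ the inner system is uniformly invertible, so $A$ and $B$ match across $\xi\to\pm\infty$ at leading order and the $\eps q$-term produces the $\delta$-interaction \eqref{MatrixHinC2} of strength $a_0$. In case A1 the inner problem is resonant ($\pi=0$), admitting the zero-mode $\omega$ whose asymptotic limits are $0$ and $\kappa$; Fredholm solvability at order $\eps$ requires pairing the driving term $q\omega$ against $\omega$ itself, and reading off the resulting asymptotic linear profiles as $\xi\to\pm\infty$ yields the coupling matrix in \eqref{MatrixHinC1}, the phase $\arg(a_2-\kappa\bar a_1)$ emerging as the normalisation constant of $\omega$. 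Case A2 is the analogue when $f_0 g_0\neq 0$: here $\lambda=0$ produces the zero-mode $\sigma$ with asymptotic values $\sigma_\pm$, a Wronskian-type identity (to be established via Lemma~\ref{LemmaHBS}) forces $\sigma_+\in\Real$, and the integral $\sigma_*$ supplies the off-diagonal entry of \eqref{MatrixHinA2}. The sub-case of A3 with $\pi=0$, $\kappa=a_1=a_2=0$ eliminates all zero-mode contributions and so returns to \eqref{MatrixHinC2}.

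The principal obstacle is promoting this formal picture to \emph{norm} resolvent convergence, i.e.\ uniformity in $\phi$. I would construct an approximate inverse $T_\eps\phi=(\mathcal{H}-\zeta)^{-1}\phi+\eps\,w_\eps\xe$, where the inner corrector $w_\eps$ solves the inhomogeneous inner problem with matching data drawn from the boundary values of $(\mathcal{H}-\zeta)^{-1}\phi$ at $\pm 0$, and verify $(H_\eps-\zeta)T_\eps=I+r_\eps$ with $\|r_\eps\|_{\mathbf{B}(L_2(\Real))}=o(1)$. Combined with a uniform bound $\sup_\eps\|(H_\eps-\zeta)^{-1}\|<\infty$, the second resolvent identity then delivers $\|(H_\eps-\zeta)^{-1}-(\mathcal{H}-\zeta)^{-1}\|\to 0$. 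The delicate point is maintaining $L_2$-control of $w_\eps$ in the degenerate regimes $\pi=0$ or $\lambda=0$, where the leading-order inner operator has non-trivial kernel and solvability hinges on a Fredholm alternative against $\omega$ or $\sigma$; carefully tracking these compatibility conditions, and checking that they produce precisely the entries of \eqref{MatrixHinC1} and \eqref{MatrixHinA2}, is where the main technical work lies.
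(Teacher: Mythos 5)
Your formal derivation of the coupling conditions follows, in substance, the same route as the paper: rescale to the inner variable, solve the leading equation $-u''+Qu=0$, reduce to a finite linear system whose degeneracy is governed by $\pi$ (resp.\ $\lambda$), and extract \eqref{MatrixHinC1}--\eqref{MatrixHinC2} from Fredholm solvability of the order-$\eps$ inner problem \eqref{bvpV1}. Two local slips there: integrating $u''=\la g,u\ra f+\la f,u\ra g$ gives $u=A+B\xi+\la g,u\ra\,\spr{f}+\la f,u\ra\,\spr{g}$ with plus signs, and with your signs the degeneracy condition would read $\|\fpr{f}\|\,\|\fpr{g}\|=|1-\la \fpr{f},\fpr{g}\ra|$ rather than $\pi=0$; moreover, in case A1 solvability must be tested against \emph{both} half-bound states $1$ and $\omega$ (this is how $a_0$ and $a_1$ enter, cf.\ \eqref{SolvabilityOfNHP}), the phase in \eqref{MatrixHinC1} arising from inversion of the left-hand matrix in \eqref{MatrixEquality}, not from a normalisation of $\omega$. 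These are repairable. The genuine gaps are in the uniformity step, which you correctly single out as the main work but whose sketch fails as written.

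First, the ansatz $T_\eps\phi=y+\eps\,w_\eps\xe$ with $y=(\mathcal{H}-\zeta)^{-1}\phi$ cannot yield a small remainder, because it keeps the outer function $y$ on $(-\eps,\eps)$. There, cancellation of the singular terms requires the approximation to solve $-u''+Qu=0$ in the inner variable; $y$ does not, and the resulting error does not merely fail to vanish --- it blows up. Concretely, in case A1 (so $g_0=0$) one has $\la g(\eps^{-1}\cdot),y\ra=\eps\,(y_+-y_-)\int_0^1\bar g\,dt+O(\eps^2)$, so the term $\eps^{-3}\la g(\eps^{-1}\cdot),y\ra\, f\xe$ in $(H_\eps-\zeta)y$ has $L_2(\Real)$-norm of order $\eps^{-3/2}$ whenever $y_+\neq y_-$ and $\int_0^1 g\,dt\neq 0$, whereas the corrector contributes only $\eps^{-1}\bigl(-w_\eps''+Qw_\eps\bigr)\xe$ plus lower order, i.e.\ $O(\eps^{-1/2})$ in norm for $w_\eps$ bounded in $W_2^2(\cI)$. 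The point is that the leading inner profile $u\xe$ (equal to $y_-+\kappa^{-1}(y_+-y_-)\,\omega\xe$ in case A1, see \eqref{VInCaseKappaNotZero}) differs from $y$ by $O(1)$ on $(-\eps,\eps)$ and cannot be hidden inside $\eps w_\eps$: the paper's approximation \eqref{YepsA1KappaNot0} replaces $y$ by $u\xe+\eps v\xe+\eps^2\vartheta_\eps\xe$ there, and in addition glues the inner and outer pieces with an explicit jump corrector $\rho_\eps$ (Proposition~\ref{PropW22Corrector}); without such gluing $T_\eps\phi\notin\dom H_\eps$ at all, since $y$ jumps at $0$ and the matched pieces jump at $\pm\eps$.

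Second, two defects of the functional-analytic frame. (i) Fixing $\zeta=-\mu^2$ and invoking $\sup_\eps\|(H_\eps-\zeta)^{-1}\|<\infty$ is unjustified, and false in general: $H_\eps$ is unitarily equivalent to $\eps^{-2}\bigl(-\tfrac{d^2}{dt^2}+Q+\eps q+\eps^2V(\eps t)\bigr)$, so any negative eigenvalue of $B$ forces spectrum of $H_\eps$ to run to $-\infty$ like $\eps^{-2}$ and to sweep through $-\mu^2$; such $f,g$ exist within the hypotheses of case A3 (take $f=F'$, $g=-F'+\delta H'$ with $\|F\|$ large and $\delta$ small). The paper avoids this by working with $\Im\zeta\neq 0$, where $\|(H_\eps-\zeta)^{-1}\|\le|\Im\zeta|^{-1}$ is automatic from self-adjointness. (ii) Even after inserting the inner profile, your scheme feeds the data $\phi$ into the approximation only through the boundary values of $y$; the part of $\phi$ supported in $(-\eps,\eps)$ then survives in the remainder as $-\phi\,\chi_{(-\eps,\eps)}$, and testing with unit-norm $\phi$ supported in $(-\eps,\eps)$ shows that the remainder does not tend to zero in operator norm --- as it stands you would obtain strong, not norm, resolvent convergence. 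This is exactly why the paper introduces the second inner corrector $\vartheta_\eps$ solving \eqref{C2ProblemVe1} with right-hand side $h(\eps\,\cdot)$, controlled via $\|h(\eps\,\cdot)\|_{L_2(\cI)}\le\eps^{-1/2}\|h\|$, and it is this mechanism that produces the uniform rate $\eps^{1/2}$ of Proposition~\ref{PropRepsSeps}.
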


Remark that function $g_0\fpr{f}-f_0 \fpr{g}$ belongs to $L_2(\Real)$, because $g_0f-f_0g$ has the zero mean.

Let $\mathcal{V}_-$ and $\mathcal{V}_+$ be the spaces
obtained by the restriction of all elements of $\mathcal{V}$ to $\Real_-$ and $\Real_+$ respectively.
We introduce the operators
\begin{align*}
  \mathcal{D}_\pm=& -\frac{d^2}{d x^2}+V, \qquad \dom \mathcal{D}_\pm=\{v\in \mathcal{V}_\pm\colon v(0)=0\};  \\
  \mathcal{R}_\pm(\theta)=&-\frac{d^2}{d x^2}+V, \qquad \dom \mathcal{R}_\pm=\{v\in \mathcal{V}_\pm\colon v'(0)=\theta v(0)\},
\end{align*}
where $\theta\in\Real$.
In the next theorem we will assemble together all cases, when the limit operator is  a direct sum of two operators acting independently on the negative and positive semiaxes. The corresponding point interactions
are described by the boundary conditions
\begin{equation}\label{SepConds}
\alpha_1v_-+\beta_1v'_-=0, \quad \alpha_2v_++\beta_2v'_+=0
\end{equation}
with real coefficients $\alpha_k$ and $\beta_k$. These conditions are called separated in contrast to conditions \eqref{ConnectedCond}, which are called connected.

\begin{thms}\label{ThmS}
Let $f, g\colon \Real\to\Comp$ and $q\colon \Real\to\Real$ be integrable functions of compact support, and $f$ and $g$ are linearly independent in $L_2(\Real)$.
\begin{itemize}
\setlength\itemsep{1em}
\item[B1.] Suppose that $f_0=0$, $g_0=0$, $\pi=0$, $\kappa\neq 0$, and $a_2=\overline{\kappa} a_1$. Then  operators  $H_\eps$  converge to  the direct sum $\mathcal{R}_-(\theta_1)\oplus \mathcal{R}_+(\theta_2)$
      as $\eps\to 0$ in the norm resolvent sense, where
  $\theta_1=|\kappa|^{-2}a_2-a_0$ and $\theta_2=|\kappa|^{-2}a_2$.

\item[B2.] If $\lambda=0$, $f_0g_0\neq 0$, $f_1g_0\neq f_1g_0$ and $\sigma_- \sigma_+=0$, then
      \begin{equation*}
        H_\eps\to
        \begin{cases}
          \mathcal{D}_-\oplus\mathcal{R}_+(\theta_+), & \text{if } \sigma_-=0,\\
          \mathcal{R}_-(\theta_-)\oplus \mathcal{D}_+, &\text{if }\sigma_+=0,
        \end{cases}\qquad \text{as } \eps\to 0
      \end{equation*}
      in the norm resolvent sense, where $\theta_+=\sigma_*|\sigma_+|^{-2}$ and $\theta_-=-\sigma_*|\sigma_-|^{-2}$.
  \item[B3.]   Suppose that one of the following conditions holds
 \begin{itemize}
   \item[$\circ$] $\lambda\neq0$;
   \item[$\circ$] $\lambda=0$,   $f_0g_0\neq 0$, $f_0g_1=f_1g_0$, $\sigma_-=0$ and $\sigma_+=0$;
   \item[$\circ$] $f_0=0$, $g_0=0$, $\pi=0$, $\kappa=0$, $a_2=0$ and $a_1\neq0$.
 \end{itemize}
Then the operator family $H_\eps$  converges to  the direct sum $\mathcal{D}_-\oplus \mathcal{D}_+$  in the norm resolvent sense.
\end{itemize}
\end{thms}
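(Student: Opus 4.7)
The plan is to prove norm resolvent convergence in each case by the method of matched asymptotic expansions. After the change of variable $x=\eps\xi$ on the support of perturbation, setting $w(\xi)=u_\eps(\eps\xi)$ converts the equation $(H_\eps-z)u_\eps=h$ into an inner equation whose principal part reads
\begin{equation*}
-\tfrac{d^2 w}{d\xi^2} + \la g,w\ra f(\xi) + \la f,w\ra g(\xi) + \eps\, q(\xi)\, w = \eps^2\bigl(z\,w + h(\eps\xi) - V(\eps\xi)\,w\bigr).
\end{equation*}
My first step is to analyse the homogeneous problem at order $\eps^0$: outside the (compact) support of $f$ and $g$ every solution is affine, and the self-consistency of the brackets $\la g,w\ra$, $\la f,w\ra$ reduces to a $2\times 2$ linear system whose solvability, and the relations it imposes between $w(\pm\infty)$ and $w'(\pm\infty)$, is controlled exactly by the parameters $\lambda$, $\pi$, $\omega$, $\kappa$, $\sigma_\pm$ introduced in \eqref{Pi}--\eqref{sigmapm}. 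The branching of Theorem~\ref{ThmS} mirrors the branching of how this small system can degenerate.

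Next I would set up a three-scale Ansatz $u_\eps(x)\sim v_0(x)+w_0(\eps^{-1}x)+\eps\, w_1(\eps^{-1}x)$, in which $v_0$ solves $(H_0-z)v_0=h$ on $\Real\setminus\{0\}$ with interface data at $0\pm$ to be determined, $w_0$ is a bounded solution of the principal inner problem, and $w_1$ is the first-order correction carrying the contribution of the potential $q$. Matching the outer traces $v_0(\pm 0)$ and $v_0'(\pm 0)$ with the asymptotics of $w_0+\eps\, w_1$ as $\xi\to\pm\infty$ yields the limiting interface conditions. The three separated cases arise as follows. In case B3 the principal inner problem admits only the trivial bounded solution (for three distinct algebraic reasons listed in the hypothesis, namely a non-degenerate $\lambda\neq 0$ and two further scenarios in which the system still has no non-trivial bounded solution), forcing $v_0(\pm 0)=0$ on both sides. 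In case B2 a bounded inner solution exists but vanishes at one of the infinities ($\sigma_\pm=0$), decoupling that half-line into a Dirichlet problem; the $q$-correction on the other half-line produces the Robin coefficient $\theta_\pm=\pm\sigma_*|\sigma_\pm|^{-2}$. In case B1 the bounded inner solutions decouple at $\pm\infty$ but have non-vanishing limits on both sides, and the degeneracy $a_2=\bar\kappa a_1$ ensures that the $q$-correction contributes a Robin coefficient to each half-line, namely $\theta_1=|\kappa|^{-2}a_2-a_0$ and $\theta_2=|\kappa|^{-2}a_2$.

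The final step is to upgrade the formal expansion into a genuine norm resolvent statement. Assembling the Ansatz with a smooth cut-off between the inner region (size $\eps$) and the outer region produces an approximate resolvent $R_\eps$, and the residual estimate $\|(H_\eps-z)R_\eps h-h\|=O(\eps)\,\|h\|$ yields $\|(H_\eps-z)^{-1}-R_\eps\|=O(\eps)$ for non-real $z$. A parallel comparison $\|R_\eps-(\mathcal{H}_{\mathrm{lim}}-z)^{-1}\|=o(1)$ on each half-line, which uses the local boundedness of $V$ and the compactness of $\supp f\cup\supp g\cup\supp q$, closes the argument. The main obstacle I anticipate is the case analysis in B3: the three sub-hypotheses make the principal linear system degenerate for quite different algebraic reasons, and one must verify in each sub-case that the matching forces full Dirichlet decoupling rather than some hidden surviving transmission. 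A secondary subtlety is that the Robin parameters in B1 and B2 must come out real, which is a necessary consistency check for self-adjointness of the limit operator and should follow from the self-adjointness of each $H_\eps$ together with the fact that $a_0,a_2\in\Real$ and $\sigma_*\in\Real$.
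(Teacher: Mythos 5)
Your overall strategy---two-scale matched asymptotics in the stretched variable, an approximate resolvent glued from inner and outer parts, and a residual estimate---is the same as the paper's. However, your treatment of case B3 contains a genuine error that would break the argument in two of its three scenarios. You claim that under all three B3 hypotheses ``the principal inner problem admits only the trivial bounded solution,'' forcing $v_0(\pm 0)=0$. That is true only for $\lambda\neq 0$. In the second scenario ($\lambda=0$, $f_0g_0\neq 0$, $f_0g_1=f_1g_0$, $\sigma_-=\sigma_+=0$), Lemma~\ref{LemmaHBS}\,\textit{(i)} provides a \emph{non-trivial} half-bound state $\sigma$; the Dirichlet decoupling comes not from triviality but from the matching conditions \eqref{CouplingYV0}, which give $y_\pm=c_0\sigma_\pm=0$ precisely because the half-bound state vanishes at both endpoints. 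More seriously, in the third scenario ($f_0=g_0=0$, $\pi=0$, $\kappa=0$, $a_2=0$, $a_1\neq 0$) the operator $B$ has a \emph{double} zero-energy resonance: the bounded solutions of the leading inner problem are spanned by the constant function and $\omega$, and the constant has non-vanishing limits at both infinities. Hence the leading-order matching yields only continuity, $y_+=y_-=y(0)$, and no analysis of the order-$\eps^0$ problem alone can produce Dirichlet conditions. The condition $y(0)=0$ emerges only at the next order: the Fredholm solvability conditions for the first-order corrector problem \eqref{bvpV1}, which is where the potential $q$ enters, are \eqref{SolvCondsKappaZero}, and the second of these, $\bar{a}_1y(0)+a_2c_2=0$, with $a_2=0$ and $a_1\neq 0$, forces $y(0)=0$. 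Your plan, which attributes all of B3 to degeneracy of the leading-order system, would therefore fail exactly in the sub-case that the hypothesis $a_1\neq 0$ is designed to handle.

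Two smaller points. First, your claimed residual bound $\|(H_\eps-z)R_\eps h-h\|=O(\eps)\|h\|$ is too optimistic uniformly in $h$: the inner corrector absorbing $h$ on the support of the perturbation is controlled only through $\|h(\eps\,\cdot)\|_{L_2(\cI)}\leq \eps^{-1/2}\|h\|$, which is sharp for $L_2$ data, so the attainable rate is $O(\eps^{1/2})$, as in estimate \eqref{MainEst}; this affects only the rate, not the conclusion. Second, in case B2 you need Proposition~\ref{PropSigmaPrps}: part \textit{(ii)} (using $f_0g_1\neq f_1g_0$) guarantees that only one of $\sigma_-$, $\sigma_+$ can vanish, so the dichotomy in the statement is exhaustive, and part \textit{(i)} gives the reality of $\sigma_+$; your appeal to self-adjointness of $H_\eps$ alone does not by itself produce the specific real Robin constants $\theta_\pm$.
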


As depicted in the  graph (Fig.~\ref{FigBranchingAlgorithm}), Theorems~\ref{ThmC} and \ref{ThmS} cover all limit cases as $\eps\to 0$. We need only note explicitly that if $f_0g_1=f_1g_0$, then $\sigma_-=\sigma_+$ (we will prove this fact below).
We also remark   that the case when $\|g_0\fpr{f}-f_0 \fpr{g}\|^2=2\Re{(f_0\bar{g}_0)}$,  $f_0g_0=0$, but only one of  mean values $f_0$ and $g_0$ equals zero (the node ``x'' of the graph), is impossible under our assumption about linear independence of $f$ and $g$. For instance, if $f_0=0$ and $g_0\neq 0$, then condition  $\|g_0\fpr f\|^2=0$ yields $f=0$.

Theorems~\ref{ThmC} and \ref{ThmS} can be summarized by saying that
the family of operators $H_\eps$ always  converges in the norm resolvent sense.

\begin{thms}\label{Thm3}
Let $f, g\colon \Real\to\Comp$ and $q\colon \Real\to\Real$ be integrable functions of compact support. Assume  $f$ and $g$ are linearly independent in $L_2(\Real)$ and define the sequences of scaled functions        $f^\eps=f(\eps^{-1}\,\cdot)$, $g^\eps=g(\eps^{-1}\,\cdot)$ and $q^\eps=q(\eps^{-1}\,\cdot)$. Then operator family $H_\eps=H_0+\eps^{-3}\la g^\eps,\,\cdot\,\ra \,f^\eps
  +\eps^{-3}\la f^\eps,\,\cdot\,\ra\, g^\eps+\eps^{-1}q^\eps$
converges in the norm resolvent sense as $\eps\to 0$ to some operator $\mathcal{H}=\mathcal{H}(f,g,q)$ and we have estimate
  \begin{equation}\label{MainEst}
  \|(H_\eps-\zeta)^{-1}-(\mathcal{H}-\zeta)^{-1}\|\leq C \eps^{1/2}
  \end{equation}
for all $\zeta\in \Comp\setminus\Real$, where the constant $C$ does not depend  on  $\eps$. The limit operator $\mathcal{H}$ is described in Theorems~\ref{ThmC} and \ref{ThmS}.
\end{thms}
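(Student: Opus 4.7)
Theorem~\ref{Thm3} is essentially the assembly of Theorems~\ref{ThmC} and~\ref{ThmS} into a single statement endowed with a quantitative rate. The plan is therefore two-fold. First I would verify that the branching diagram of Fig.~\ref{FigBranchingAlgorithm} is exhaustive and mutually exclusive, so that every admissible triple $(f,g,q)$ falls into exactly one of the six terminal nodes A1--A3, B1--B3. The compatibility claims already signalled in the text---that $f_0g_1=f_1g_0$ forces $\sigma_-=\sigma_+$, that the ``x'' node is vacuous under linear independence, and that the dichotomies on $\pi,\kappa,\lambda,\sigma_\pm$ are disjoint---follow from short direct computations with the antiderivatives $\fpr{f},\fpr{g},\spr{f},\spr{g}$. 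Once exhaustiveness is in hand, the norm resolvent convergence to the specified $\mathcal{H}(f,g,q)$ is immediate from the per-case statements.

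\textbf{Matched expansion and rate.} The bulk of the work is the quantitative bound~\eqref{MainEst}. I would construct, for fixed $\zeta\in\Comp\setminus\Real$ and $F\in L_2(\Real)$, an approximate state $\tilde u_\eps$ comprising an outer piece $u_0=(\mathcal{H}-\zeta)^{-1}F$ on $\Real\setminus\{0\}$ (carrying the coupling or separated conditions specified by Theorems~\ref{ThmC}--\ref{ThmS}) plus an inner boundary layer $\eps\, w(x/\eps)$ localised near the origin. A direct substitution shows that on the stretched scale $\xi=x/\eps$ the operator $\eps^2 H_\eps$ reduces, to leading order, to $L_0=-\partial_\xi^2+Q_0$ with $Q_0 v=\la g,v\ra f+\la f,v\ra g$, and that $\eps^{-3}Q_\eps[\eps w(\cdot/\eps)]=\eps^{-1}(Q_0 w)(x/\eps)$; the linear asymptotics at $\xi\to\pm\infty$ of the two zero-energy solutions of $L_0$ produce, through pairings with the antiderivatives of $f$ and $g$, precisely the quantities $\pi,\omega,\kappa,\sigma_\pm,\lambda$ of \eqref{Pi}--\eqref{sigmapm} and the matching matrix appearing in~\eqref{MatrixHinC1}--\eqref{MatrixHinC2} or the separated conditions~\eqref{SepConds}. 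Splicing $u_0$ with $\eps w(\cdot/\eps)$ by a smooth cutoff supported on $|x|\le 2\sqrt{\eps}$ yields $(H_\eps-\zeta)\tilde u_\eps = F + r_\eps$, with $r_\eps$ concentrated on the transition shell $\sqrt{\eps}\lesssim|x|\lesssim 2\sqrt{\eps}$; the $L_2$-width of this shell is what ultimately produces the $\eps^{1/2}$ rate. Applying $(H_\eps-\zeta)^{-1}$ (bounded by $|\Im\zeta|^{-1}$) and estimating $\|\tilde u_\eps-u_0\|$ separately then delivers~\eqref{MainEst} with a constant depending on $\zeta$ but not on $\eps$ or $F$.

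\textbf{Main obstacle.} The principal difficulty is the uniform handling of the degenerate branches of the tree. On the generic nodes the inner problem $L_0 w=0$ has a transparent two-dimensional null space whose scattering data directly yields the coupling matrix; but on the branches where $\pi=0$, $\sigma_\pm=0$ or $\kappa=0$ the leading inner mode no longer carries the coupling, and one has to push the expansion to the next order to capture the $q$-dependent information encoded in $a_0,a_1,a_2,\sigma_*$. Arranging the ansatz and the cutoff so that the same $\sqrt{\eps}$-shell argument yields~\eqref{MainEst} across all six branches, rather than a case-dependent sub-optimal rate, is the main bookkeeping task and the place where the exponent $1/2$ emerges uniformly.
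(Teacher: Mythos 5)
Your skeleton (outer solution obeying the limit coupling conditions, rescaled zero-energy inner solutions, case analysis along the bifurcation graph, higher-order inner terms on the degenerate branches) is the same as the paper's, and your exhaustiveness discussion matches the paper's remark following Theorem~\ref{ThmS}. The genuine gap is in the quantitative core: the splicing mechanism and your explanation of where the exponent $1/2$ comes from. If you glue the outer function $y=(\mathcal H-\zeta)^{-1}h$ to the inner layer by a smooth cutoff $\chi_\eps$ living on a shell of width $d$ (your choice $d=\sqrt\eps$), the remainder contains the commutator terms $\chi_\eps''\,(Z_\eps-y)$ and $2\chi_\eps'\,(Z_\eps-y)'$, where $Z_\eps$ denotes the linearly extended inner solution. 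Away from the origin $y$ is only a $W_2^2$-function, so its Taylor error at distance $d$ is of size $d^{3/2}\|h\|$ (and $d^{1/2}\|h\|$ for $y'$), and these bounds cannot be improved uniformly over $\|h\|\le 1$. Hence $\|\chi_\eps''(Z_\eps-y)\|\sim d^{-2}\cdot d^{3/2}\cdot d^{1/2}\,\|h\|=\|h\|$ and likewise $\|\chi_\eps'(Z_\eps-y)'\|\sim d^{-1}\cdot d^{1/2}\cdot d^{1/2}\,\|h\|=\|h\|$, \emph{independently of the scale} $d$: a cutoff splice leaves an $O(\|h\|)$ remainder at every scale, so it cannot give norm resolvent convergence at all, let alone the rate; the shell width does not produce the $\eps^{1/2}$. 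The paper avoids interpolation entirely: the approximation $Y_\eps$ in \eqref{YepsA1KappaNot0} equals the inner expansion on all of $(-\eps,\eps)$ and equals $y$ outside, and the resulting jump discontinuities at $x=\pm\eps$ are cancelled by adding a corrector $\rho_\eps$ of \emph{fixed shape} (Proposition~\ref{PropW22Corrector}), whose $C^2$-size is proportional to the jumps themselves; since the jumps are $O(\eps^{1/2}\|h\|)$ by the H\"older-$\tfrac12$ modulus of $y\in W_2^2$, no inverse powers of $\eps$ ever enter the remainder.

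The second gap is that your inner ansatz has no $h$-dependent term. Inside $|x|<\eps$ your spliced function satisfies the homogeneous (or $q$-forced) inner equation, so the remainder contains the restriction of $h$ to $(-\eps,\eps)$; its $L_2$-norm is not $o(1)\|h\|$ uniformly (take $h$ concentrated on $(-\eps,\eps)$), which again caps you at strong, not norm, resolvent convergence. The paper's third inner term $\eps^2\vartheta_\eps(x/\eps)$, with $\vartheta_\eps$ solving $-\vartheta_\eps''+Q\vartheta_\eps=h(\eps\,\cdot)+\gamma_\eps m$ under the solvability-compatible boundary data of \eqref{C2ProblemVe1}, is exactly what absorbs this contribution, and it is also where the rate truly originates: $\|\vartheta_\eps\|_{W_2^2(\cI)}\le c\,\eps^{-1/2}\|h\|$ because $\|h(\eps\,\cdot)\|_{L_2(\cI)}\le\eps^{-1/2}\|h\|$, so the term enters the estimates of Proposition~\ref{PropRepsSeps} at order $\eps^{3/2}\|h\|$, while the jump estimates contribute $\eps^{1/2}\|h\|$; together these give \eqref{MainEst}, and the paper observes the exponent is optimal precisely because the rescaling bound on $\|h(\eps\,\cdot)\|_{L_2(\cI)}$ is sharp for $L_2$-data. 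A smaller inaccuracy: your leading inner layer $\eps\,w(x/\eps)$ has the wrong size; it must be $O(1)$ — a half-bound state of $B$ interpolating the boundary values $y_\pm$ — with the $O(\eps)$ term carrying the derivative data $y'_\pm$ and the $O(\eps^2)$ term carrying $h$, exactly as in \eqref{bvpV0}, \eqref{bvpV1} and \eqref{C2ProblemVe1}.
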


\begin{figure}[b]
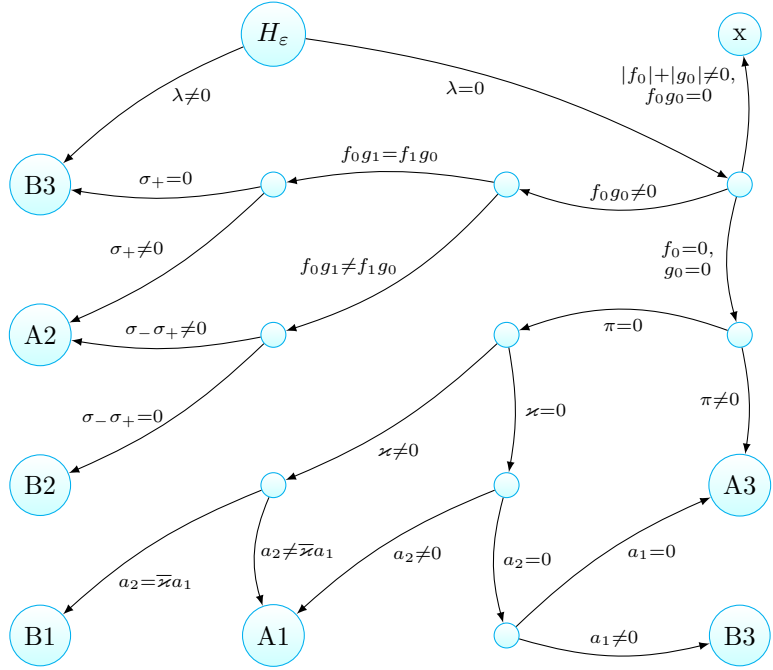

  \centering
 \Graph
 \\
  \caption{Bifurcation graph of limiting behaviour of $H_\eps$.}
  \label{FigBranchingAlgorithm}
 \end{figure}

Remark that the $\delta$-like sequence $\eps^{-1}q^\eps$ is obviously   subordinated to the rank-two perturbation as $\eps\to 0$, ne\-ver\-the\-less it  has a considerable influence on the limit behaviour of $H_\eps$. We should note that  the most interesting case A1 is possible only if the potential $q$ is different from zero.

All the cases A1--A3, B1--B3  can be realized by a proper choice of  triple $(f,g,q)$. For instance,  we explain how to choose the triple in case A1.
Let us consider two  functions $F$ and $G$ of compact support, belonging to $W_2^1(\Real)$, such that $\|F\|=\|G\|=1$ and $\la F,G\ra=0$. Then  $f=F'$ and $g=G'$ have zero means and satisfy the condition $\pi=0$. Next, $\omega=\fpr{F}-\fpr{G}$ and we can calculate $\kappa=\omega(+\infty)$. The linear independence of $F$ and $G$ implies
the linear independence of $f$ and $g$ and also the linear independence of functions $1$, $\omega$ and $\omega^2$ on each interval $[-r,r]$. Therefore for any  $(a_0,a_2)\in\Real^2$ and $a_1\in \Comp$ there exists a potential $q$ of compact support satisfying \eqref{Ak}. In particular, the potential can be chosen in such a way that  $a_2\neq\overline{\kappa} a_1$.

In this connection, the next question arises as to whether any real matrix $(c_{kl})$ with the unit determinant can be realized in coupling conditions \eqref{MatrixHinC1} for some  $f$, $g$ and $q$.
The answer is negative, the matrix
\begin{equation}\label{MatrixDpr}
   \begin{pmatrix}
     \alpha & 0  \\
     \beta    &  \alpha^{-1}
  \end{pmatrix}
\end{equation}
with $\alpha\neq 1$ is a counterexample. In fact, if we put $\kappa=0$ in \eqref{MatrixHinC1}, we immediately obtain the matrix with the unit diagonal. However, such matrices appear in  case~A2. It is worth  mentioning that the point interactions given by \eqref{MatrixDpr} also arise in  analysis of the  Schr\"{o}dinger operators with $(a\delta'+b\delta)$-like potentials \cite{GadellaNegroNietoPL2009, GadellaGlasserNieto2011, GolovatyMFAT2012, GolovatyIEOT2013, Zolotaryuks11}.

Now we will discuss a few special subcases.

\smallskip

\paragraph{\textit{Regularization of  pseudo-Hamiltonian \eqref{PseudoH2}.}}
If we suppose that $f_0=\alpha$, $g_0=0$ and $g_1=-1$, then
$\eps^{-1}f(\eps^{-1}x)\to \alpha\delta(x)$ and  $\eps^{-2}g(\eps^{-1}x)\to \delta'(x)$, as $\eps\to 0$, in the sense of distributions. Then the family $H_\eps$ can be treated as the regularization of the formal operator~\eqref{PseudoH2} with $\beta=a_0$.
Suppose that $\alpha$ is different from zero. Since $\lambda=\alpha^2\|\fpr{g}\|^2\neq 0$,
we fall into the conditions of  case B3, and so $H_\eps$ converge to  the direct sum $\mathcal{D}_-\oplus \mathcal{D}_+$ in the norm resolvent sense.

\smallskip

\paragraph{\textit{Generalized $\delta'$-interactions.}}
Suppose that $f$ and $g$ are real-valued functions.
Under the assumptions of case A1, we assume that  $a_0a_2=a_1^2$. Then operators $H_\eps$ give us the approximation to the point interactions with matrix
        \begin{equation*}
         \begin{pmatrix}
               \alpha &    \beta\\
                0      &  \alpha^{-1}
           \end{pmatrix},
        \end{equation*}
where $\alpha=a_2^{-1}(a_2-\kappa a_1)$ and $\beta=(a_2-\kappa a_1)^{-1}\kappa^2$. This case has been treated recently in \cite{Golovaty2018}.   In particular, if $a_1=0$, then $\alpha=1$ and $\beta=\kappa^2a_2^{-1}$. So we obtain the new approximation to  the classic $\delta'$-interactions of strength $\beta$.

\smallskip

\paragraph{\textit{Exotic point interactions.}}
The case A1 also contains a few  types of specific limit point interactions. For instance,  if we choose potential $q$ such that $a_2=0$, but $a_1\neq 0$, then the limit operator $\mathcal{H}$ is associated with the point interactions
      \begin{equation*}
      e^{i \phi}   \begin{pmatrix}
           \beta  &    -\alpha\\
            \alpha^{-1}   &  0
           \end{pmatrix},
        \end{equation*}
where $\alpha=-|\kappa| |a_1|^{-1}$,
$\beta=|\kappa|^{-1}a_1^{-1}(|\kappa|^2a_0-2\Re(\overline{\kappa}a_1))$  and $\phi=\arg (a_2-\overline{\kappa} a_1)$.

In the case when $a_2=2\Re(\overline{\kappa}a_1)-|\kappa|^2a_0$, operators $H_\eps$ converge to the operator which describes the point interactions
      \begin{equation*}
     e^{i \phi}   \begin{pmatrix}
            0     &  -\alpha\\
            \alpha^{-1} &      \beta
        \end{pmatrix},
\end{equation*}
where $\alpha=-|\kappa|^2|a_2-\overline{\kappa} a_1|^{-1}$ and $\beta=a_2|a_2-\overline{\kappa} a_1|^{-1}$ and
$\phi=\arg ({a}_1\kappa \bar-a_0|\kappa|^2)$.

\smallskip

It is worth noting that Theorem~\ref{ThmS} provides the approximation to almost all point interactions given by  separated boundary conditions \eqref{SepConds} -- the Robin-Robin type (case B1), the Neumann-Neumann (case B1 with $q=0$), the Robin-Dirichlet and Dirichlet-Robin types (case B2) and the Dirichlet-Dirichlet type (case B3).

\section{Half-Bound States}
Let us consider the operator
$$
   B=-\frac{d^2}{dx^2}+\la g,\,\cdot\,\ra\,f(x)+\la f,\,\cdot\,\ra\,g(x),\quad \dom B=W_2^2(\Real),
$$
in  $L_2(\Real)$. We will introduce the notion of half-bound state, which plays a crucial role in our considerations.

\begin{defn}
We say that the  operator~$B$  possesses a half-bound state provided there exists a nontrivial solution of the equation  $-u''+\la g, u\ra f+\la f,u\ra g= 0$ that is bounded on the whole line.
\end{defn}
In this case we also say that $B$ has \textit{a zero-energy resonance.}
All half-bound states of $B$ form a linear space.

\begin{lem}\label{LemmaHBS}
 The operator $B$ possesses a half-bound state if and only if
  \begin{equation}\label{HBScondition}
   \|g_0\fpr{f}-f_0 \fpr{g}\|^2=2\Re{(f_0\bar{g}_0)}.
  \end{equation}
The operator can possess one or two linearly independent half-bound states.

\noindent
\textit{(i)}  If  \eqref{HBScondition} holds and $f_0g_0\neq 0$, then $B$  has the  half-bound state
\begin{equation}\label{HBS1}
  \sigma=|g_0|^2\left(\bar{f}_0\spr{f}-\la f,\spr{f}\ra\right)
  -|f_0|^2\left(\bar{g}_0\spr{g}-\la g,\spr{g}\ra\right).
\end{equation}

\noindent
\textit{(ii)} If $f_0=0$, $g_0=0$ and $\pi\neq 0$, then  the only constant function is a  half-bound state of $B$.

\noindent
\textit{(iii)}\;{\rm(Double zero-energy resonance)}
 If  $f_0=0$, $g_0=0$ and $\pi=0$, then
there exist two linearly independent half-bound states of $B$, namely the constant function and
\begin{equation*}
 \omega=e^{i\arg(\la \fpr{f},\fpr{g}\ra+1)}\|\fpr{g}\|\, \spr{f}-\|\fpr{f}\|\, \spr{g}.
\end{equation*}
\end{lem}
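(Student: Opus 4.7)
The plan is to reduce the equation $Bu=0$ to a finite-dimensional compatibility problem in the scalar parameters $\alpha:=\la g,u\ra$ and $\beta:=\la f,u\ra$. Once these are viewed as constants, the equation becomes $u''=\alpha f+\beta g$, whose double integration gives
\begin{equation*}
u(x)=\alpha\,\spr f(x)+\beta\,\spr g(x)+\gamma x+\delta,\qquad \gamma,\delta\in\Comp.
\end{equation*}
Using that $\spr f$ vanishes to the left of $\supp f$ and equals $xf_0-f_1$ to the right (with the analogous formulas for $\spr g$), the function $u$ is bounded on $\Real$ if and only if $\gamma=0$ and $\alpha f_0+\beta g_0=0$. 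The remaining task is to impose the self-consistency identities $\alpha=\la g,u\ra$ and $\beta=\la f,u\ra$, splitting the analysis according to whether $f_0=g_0=0$, $f_0g_0\neq 0$, or exactly one of $f_0,g_0$ vanishes; the last possibility I will rule out at the end.

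In the case $f_0=g_0=0$ the boundedness constraint is vacuous, a constant function is manifestly a half-bound state, and, since $\fpr f$ and $\fpr g$ are themselves compactly supported, integration by parts produces no boundary contributions and yields the identities $\la g,\spr f\ra=-\la\fpr g,\fpr f\ra$, $\la g,\spr g\ra=-\|\fpr g\|^2$, together with the symmetric ones for $\la f,\spr f\ra$ and $\la f,\spr g\ra$. Writing $\mu:=\la\fpr f,\fpr g\ra+1$, the self-consistency equations collapse to the $2\times 2$ linear system
\begin{equation*}
\bar\mu\,\alpha+\|\fpr g\|^2\beta=0,\qquad \|\fpr f\|^2\alpha+\mu\,\beta=0,
\end{equation*}
whose determinant $|\mu|^2-\|\fpr f\|^2\|\fpr g\|^2$ vanishes precisely when $\pi=0$; in this resonant case the explicit pair $(\alpha,\beta)=(\|\fpr g\|\,e^{i\arg\mu},-\|\fpr f\|)$ with $\delta=0$ reproduces exactly the function $\omega$ of the statement, which settles assertions (ii) and (iii).

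The main obstacle is the case $f_0g_0\neq 0$. The boundedness constraint now forces $(\alpha,\beta)=t(g_0,-f_0)$, so the ansatz becomes $u=t\,(g_0\spr f-f_0\spr g)+\delta$, which is bounded on $\Real$ because $g_0 f-f_0 g$ has zero mean. The difficulty is that the individual quantities $\la g,\spr f\ra$, $\la f,\spr g\ra$, $\|\fpr f\|^2$ and $\|\fpr g\|^2$ now diverge, since $\fpr f$ and $\fpr g$ are no longer in $L_2(\Real)$. My plan is to carry out the integration by parts on a finite window $[-L,L]$ past both supports and then verify that the $L$-linear boundary contributions cancel inside each of the two combinations $g_0\la g,\spr f\ra-f_0\la g,\spr g\ra$ and $g_0\la f,\spr f\ra-f_0\la f,\spr g\ra$ dictated by the ansatz. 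Once this cancellation is checked, both self-consistency equations reduce to the single scalar relation
\begin{equation*}
\|g_0\fpr f-f_0\fpr g\|^2=2\Re(f_0\bar g_0),
\end{equation*}
which is \eqref{HBScondition}, and the resulting non-trivial triple $(\alpha,\beta,\delta)$ then reconstructs the half-bound state $\sigma$ after a short algebraic check.

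Finally, the ``only if'' part is completed by ruling out the configuration $|f_0|+|g_0|\neq 0$ with $f_0g_0=0$: taking, say, $g_0=0$ and $f_0\neq 0$, boundedness forces $\alpha=0$ and self-consistency then gives $\beta\,\|\fpr g\|^2=0$ and $\delta\bar f_0=0$, hence $u\equiv 0$; moreover \eqref{HBScondition} would force $\|\fpr g\|=0$, contradicting the linear independence of $f$ and $g$. The existence of the one-sided limits $\sigma_\pm$ used elsewhere in the paper is then immediate from the construction, because the affine tail slopes of $\sigma$ on the two half-lines cancel: the coefficient of $x$ in $|g_0|^2\bar f_0\spr f-|f_0|^2\bar g_0\spr g$ equals $|g_0|^2|f_0|^2-|f_0|^2|g_0|^2=0$.
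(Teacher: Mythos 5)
Your skeleton is the paper's own: every bounded solution of $Bu=0$ has the form $u=\alpha\spr f+\beta\spr g+\delta$ with $\alpha=\la g,u\ra$, $\beta=\la f,u\ra$, where boundedness kills the linear term and forces $\alpha f_0+\beta g_0=0$, so that existence of a half-bound state becomes the solvability of a finite homogeneous linear system. (The paper keeps the three unknowns $(\alpha,\beta,\delta)$ and computes a $3\times3$ determinant; you impose the boundedness constraint first and get a $2\times2$ system -- a cosmetic difference.) Your treatment of the case $f_0=g_0=0$, hence of parts \textit{(ii)} and \textit{(iii)}, is complete and correct, and so is your exclusion of the mixed case $g_0=0\neq f_0$; the closing remark on the existence of $\sigma_\pm$ is a correct bonus.

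The problems sit in the case $f_0g_0\neq 0$, which is the heart of the lemma. First, a misdiagnosis: $\la g,\spr f\ra$, $\la f,\spr g\ra$, $\la f,\spr f\ra$ and $\la g,\spr g\ra$ do \emph{not} diverge -- each is an integral of a compactly supported function against a locally bounded one. What diverges are $\|\fpr f\|^2$, $\|\fpr g\|^2$ and $\la\fpr f,\fpr g\ra$, i.e.\ the quantities produced by \emph{termwise} integration by parts; the true obstacle is only that the identity $\la v,\spr w\ra=-\la\fpr v,\fpr w\ra$ cannot be applied to $f$ and $g$ separately. Second, and decisively: the step that produces \eqref{HBScondition} and the formula \eqref{HBS1} is only announced (``once this cancellation is checked\dots'', ``after a short algebraic check''), so the equivalence and part \textit{(i)} are asserted rather than proved -- this is a genuine gap at the central claim. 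Your windowed plan would go through, but there is a cleaner closing, which is exactly the paper's device. Put $v=g_0f-f_0g$, so $u=t\spr v+\delta$; the self-consistency identities read
\begin{equation*}
t\bigl(\la g,\spr v\ra-g_0\bigr)+\delta\bar g_0=0,\qquad
t\bigl(\la f,\spr v\ra+f_0\bigr)+\delta\bar f_0=0,
\end{equation*}
and a nontrivial $(t,\delta)$ exists iff the determinant vanishes. Since $\bar f_0\la g,\spr v\ra-\bar g_0\la f,\spr v\ra=-\la v,\spr v\ra$, the determinant equals $-\la v,\spr v\ra-2\Re(f_0\bar g_0)$; and because $v$ has zero mean, $\fpr v$ is compactly supported, so integrating by parts produces no boundary terms at all and $\la v,\spr v\ra=-\|\fpr v\|^2=-\|g_0\fpr f-f_0\fpr g\|^2$. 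Thus the solvability condition is exactly \eqref{HBScondition}. When it holds, the kernel is one-dimensional (the column $(\bar g_0,\bar f_0)$ is nonzero); solving the first equation for $\delta$ with $t=\bar f_0\bar g_0$ and simplifying with the row-dependence relation that degeneracy forces yields precisely $\sigma$ of \eqref{HBS1}. Write this out (or your windowed version of it) and the proof is complete.
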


\begin{proof}
Equation $Bu=0$  has the general solution
  \begin{equation*}
  u=c_1\spr{f}+c_2\spr{g}+c_3+c_4 x,
  \end{equation*}
where the constants $c_k$  satisfy  conditions
\begin{align*}
  &\la f,\spr{f}\ra\,c_1+\bigl(\la f,\spr{g}\ra-1\bigr)\,c_2+\bar{f}_0c_3+\bar{f}_1c_4=0,  \\
  &\bigl(\la g,\spr{f}\ra-1\bigr)\,c_1+\la g,\spr{g}\ra\,c_2
  +\bar{g}_0c_3+\bar{g}_1c_4=0.
\end{align*}
These conditions are derived from the equation in view of the linear independence of  $f$ and $g$.
Next,  $u(x)=c_3+c_4 x$ for negative $x$ with  large absolute value,
since  $\spr{f}$ and $\spr{g}$ vanish in a neighbourhood of the negative infinity. Therefore $c_4=0$, because we  look for bounded solutions.
On the other hand,
\begin{equation}\label{fgLargeX}
  \spr{f}(x)=f_0x-f_1, \qquad  \spr{g}(x)=g_0x-g_1
\end{equation}
for large positive $x$.
Indeed, if  $x$ lies on the right  of $\supp f$, then
\begin{equation*}
  \spr{f}(x)=\int_{-\infty}^x(x-s)f(s)\,ds
  =x\int_{\Real}f(s)\,ds-\int_{_{\Real}}sf(s)\,ds=f_0x-f_1.
\end{equation*}
We conclude from \eqref{fgLargeX} that $u(x)=(c_1f_0+c_2g_0)x-c_1f_1-c_2g_1+c_3$
for large positive $x$, and hence that $f_0 c_1+g_0 c_2=0$, since $u$ is bounded.
Therefore  vector $\vec{c}=(c_1,c_2,c_3)$ must be a non-trivial solution of the homogeneous linear system $A\vec{c}=0$ with  matrix
\begin{equation*}
  A=
  \begin{pmatrix}
    \la f,\spr{f}\ra & \la f,\spr{g}\ra-1 &\bar{f}_0 \\
    \la g,\spr{f}\ra-1 & \la g,\spr{g}\ra & \bar{g}_0\\
    f_0 & g_0 & 0
  \end{pmatrix}.
\end{equation*}
For any zero mean functions $v$, $w$ of compact support, integrating by parts yields
\begin{equation}\label{ff2=f1f1}
\la v,\spr{w}\ra=-\la\fpr v, \fpr w\ra, \qquad \la v,\spr{v}\ra=-\|\fpr v\|^2.
\end{equation}
 Then a direct calculation verifies
\begin{multline*}
  \det A= -\la g_0f-f_0 g, g_0\spr{f}-f_0 \spr{g}\ra-2\Re{(f_0\bar{g}_0)}\\
  =\|g_0\fpr{f}-f_0 \fpr{g}\|^2-2\Re{(f_0\bar{g}_0)},
\end{multline*}
since $g_0f-f_0 g$ is a function of zero mean.
Hence operator $B$ possesses a half-bound state if and only if \eqref{HBScondition} holds, i.e., $\lambda=0$.

Suppose that $f_0g_0\neq 0$ and matrix $A$ is degenerate. It can only happen if the first and second rows of $A$ are linearly dependent. In particular, from this we deduce
\begin{equation}\label{f0g0relation}
\bar{g}_0\left(\la f,\spr{g}\ra-1\right) =\bar{f}_0\,\la g,\spr{g}\ra.
\end{equation}
Next, vector $(g_0,-f_0,c_3)$ satisfies the third equation of  system $A\vec{c}=0$ for all $c_3$. Substituting the vector to the first equation yields
\begin{equation*}
  g_0\la f,\spr{f}\ra-f_0(\la f,\spr{g}\ra-1)+\bar{f}_0c_3=0.
\end{equation*}
From \eqref{f0g0relation} we have
 $\bar{f}_0\bar{g}_0c_3=|f_0|^2\la g,\spr{g}\ra-|g_0|^2\la f,\spr{f}\ra$.
Hence the vector
\begin{equation*}
\left(|g_0|^2\bar{f}_0,\:-|f_0|^2\bar{g}_0,\:|f_0|^2\la g,\spr{g}\ra-|g_0|^2\la f,\spr{f}\ra\right)
\end{equation*}
solves   $A\vec{c}=0$ and therefore function
\begin{multline*}
 \sigma(x)=|g_0|^2\bar{f}_0 \,\spr f(x)-|f_0|^2\bar{g}_0\,\spr g(x) +|f_0|^2\la g,\spr{g}\ra-|g_0|^2\la f,\spr{f}\ra\\
 =
 |g_0|^2\left(\bar{f}_0\,\spr{f}(x)-\la f,\spr{f}\ra\right)
  -|f_0|^2\left(\bar{g}_0\,\spr{g}(x)-\la g,\spr{g}\ra\right).
\end{multline*}
is a half-bound state of $B$.

In the cases \textit{(ii)} and \textit{(iii)}, $f$ and $g$ have zero means. The matrix $A$ becomes
\begin{equation}\label{MatrixAinNfNg}
  A=-
  \begin{pmatrix}
      \|\fpr{f}\|^2& \la \fpr{f},\fpr{g}\ra +1 & 0 \\
    \la\fpr{g},\fpr{f}\ra +1 & \|\fpr{g}\|^2 &   0\\
     0 & 0 & 0
  \end{pmatrix},
\end{equation}
by \eqref{ff2=f1f1}. The rank of $A$ equals $1$ if  and only if
$\|\fpr{f}\|\,\|\fpr{g}\| =|\la\fpr{f},\fpr{g}\ra+1|$, i.e., $\pi=0$, where $\pi$ is given by \eqref{Pi}.
Then the kernel of $A$ is the span of two vectors $\vec{c}_1=(0,0,1)$ and $\vec{c}_2=(e^{i\vartheta}\|\fpr{g}\|, -\|\fpr{f}\|,0)$, where $\vartheta=\arg (\la \fpr{f},\fpr{g}\ra+1)$. In fact, substituting $\vec{c}_2$ into the first equation gives us
\begin{multline*}
 e^{i\vartheta} \|\fpr{f}\|^2\|\fpr{g}\|-\|\fpr{f}\|(\la \fpr{f},\fpr{g}\ra +1)\\
 =\|\fpr{f}\|\left(e^{i\vartheta}\,|\la \fpr{f},\fpr{g}\ra +1|-
 \la \fpr{f},\fpr{g}\ra -1\right)=0.
\end{multline*}
Hence operator $B$ possesses half-bound states $1$ and $\omega$.
Of course, the only constant function is a half-bound state of $B$ if $\pi\neq 0$.
\end{proof}

The last lemma partially explains the origination of some conditions in Theorems~\ref{ThmC} and \ref{ThmS}.

\section{Auxiliary statements}

From now on,  we  assume that the supports of $f$, $g$ and $q$ lie in  interval $\cI=[-1,1]$. This involves no loss of generality.
Then a half-bound state of  $B$ is  constant  outside the interval $\cI$ as a solution of equation $u''=0$, which is bounded at infinity (see Fig.~\ref{FigHBS}). Therefore the restriction of $u$ to $\cI$ is a nonzero solution of the Neumann boundary value problem
\begin{equation}\label{NeumanProblem}
     -u''+(g, u)\, f+(f,u)\, g= 0\quad \text{in } \cI, \quad   u'(-1)=0, \; u'(1)=0,
\end{equation}
where  $(\cdot,\cdot)$ is the scalar product in $L_2(\cI)$.

\begin{figure}[h]
  \centering
  \includegraphics[scale=0.85]{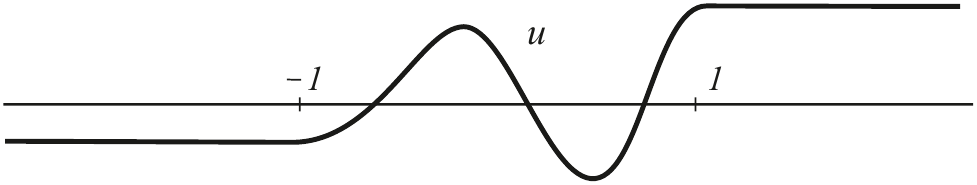}\\
  \caption{Half-bound state of $B$.}\label{FigHBS}
\end{figure}

Given $r\in L_2(\cI)$ and $a, b\in \mathbb{C}$, we consider the nonhomogeneous problem
  \begin{equation}\label{NHProblem}
  -v ''+(g, v)\, f+(f, v)\, g=r\quad \text{in } \cI, \quad v'(-1)=a, \; v'(1)=b.
\end{equation}
If operator $B$ has a half-bound state, i.e., \eqref{NeumanProblem} admits a non-trivial solution, then  problem \eqref{NHProblem} is generally unsolvable. In this case, even if the problem has a solution for some $r$, $a$ and $b$, this solution is ambiguously determined, according to Fredholm's alternative. But we can always choose a solution of  \eqref{NHProblem} for which  the estimate
\begin{equation}\label{NHPUestimate}
  \|v\|_{W_2^2(\cI)}\leq c(|a|+|b|+\|r\|_{L_2(\cI)})
\end{equation}
holds with some constant $c$ depending only on $f$ and $g$.

\begin{prop}\label{PropSolvabilityOfNHP}
(i) Suppose that operator $B$ possesses the half-bound state $\sigma$ given by \eqref{HBS1}. Then  problem \eqref{NHProblem} admits a solution if and only if
\begin{equation}\label{SolvabilityOfNHPSigma}
  a\bar{\sigma}_--b\bar{\sigma}_+=(\sigma,r).
\end{equation}

(ii)
If the only constant function is a  half-bound state of $B$, then problem \eqref{NHProblem} is solvable iff $a-b=(1,r)$.

(iii) If $B$ has the double zero-energy resonance, then  \eqref{NHProblem}  is solvable iff
\begin{equation}\label{SolvabilityOfNHP}
a-b=(1,r), \quad b\overline{\kappa}=-(\omega, r).
\end{equation}

(iv) Suppose that  \eqref{NHProblem} is solvable for given $a$, $b$ and $r$. Then it always admits a solution that satisfies \eqref{NHPUestimate}.

\end{prop}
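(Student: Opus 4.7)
My plan is to apply the Fredholm alternative to the operator
\[
T v = -v'' + (g, v) f + (f, v) g
\]
with domain $\{v \in W_2^2(\cI) \colon v'(\pm 1) = 0\}$, which is a rank-two (hence compact) self-adjoint perturbation of the Neumann Laplacian on $\cI$ and is therefore self-adjoint and Fredholm of index zero in $L_2(\cI)$. I would first observe that $\ker T$ coincides with the space of restrictions to $\cI$ of the half-bound states of $B$: any bounded solution of $Bu = 0$ on $\Real$ is affine outside $\cI$ (since $\supp f,\,\supp g\subset\cI$) and hence constant there, which is equivalent to the Neumann condition $u'(\pm 1)=0$.

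Next I would homogenize the boundary conditions: choose any $\psi\in W_2^2(\cI)$ (a cubic polynomial suffices) with $\psi'(-1)=a$, $\psi'(1)=b$ and $\|\psi\|_{W_2^2(\cI)}\le C(|a|+|b|)$, so that $w:=v-\psi$ satisfies $Tw=\tilde r$ with $\tilde r = r+\psi''-(g,\psi)f-(f,\psi)g$ and homogeneous Neumann conditions. The Fredholm alternative then states that such a $w$ exists iff $\tilde r\perp\ker T$, and the task becomes to rewrite this intrinsic orthogonality in the explicit form claimed in the statement.

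The central computation is a Green-type identity. Testing \eqref{NHProblem} against an arbitrary $u\in\ker T$, integrating by parts twice on $\cI$ (using $v'(\pm 1)=a,b$ and $u'(\pm 1)=0$), and substituting $u'' = (g,u)f+(f,u)g$ into the resulting interior term, the two rank-two contributions cancel exactly and one is left with the clean identity
\[
(u,r) = a\,\overline{u(-1)} - b\,\overline{u(1)}.
\]
This gives necessity; sufficiency follows from Fredholm duality since $T=T^*$. The three cases are then read off by substituting the concrete half-bound states from Lemma~\ref{LemmaHBS}, using that each is constant off $\cI$ so that $u(\pm 1)$ equals the limit at $\pm\infty$. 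Taking $u=\sigma$ yields \eqref{SolvabilityOfNHPSigma} in case (i); $u\equiv 1$ yields $a-b=(1,r)$ in case (ii); and in case (iii) applying the identity to $u\equiv 1$ and to $u=\omega$ (for which $\omega(-1)=0$ and $\omega(1)=\kappa$) yields both conditions of \eqref{SolvabilityOfNHP}.

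Finally, for (iv), the restriction of $T$ to $(\ker T)^\perp$ is a Banach-space isomorphism onto the range of $T$ by the closed range theorem, so the canonical solution $w\perp\ker T$ of $Tw=\tilde r$ obeys $\|w\|_{L_2(\cI)}\le C\|\tilde r\|_{L_2(\cI)}$; elliptic regularity for the Neumann Laplacian lifts this to the full $W_2^2$ bound, and adding back $\psi$ produces \eqref{NHPUestimate}. The only genuinely delicate step is the cancellation of rank-two terms in the Green identity, which hinges on carefully tracking the complex conjugates arising from $\overline{(g,u)}$ and $\overline{(f,u)}$; once that is verified, everything else is standard Fredholm theory.
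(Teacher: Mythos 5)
Your proof is correct. Parts (i)--(iii) follow essentially the same route as the paper: the paper also invokes Fredholm's alternative for the self-adjoint Neumann realization $B_0$ of the rank-two-perturbed operator on $\cI$ and obtains the conditions by "multiplying \eqref{NHProblem} by $1$ and $\overline{\omega}$ in turn and integrating by parts twice" --- exactly your Green identity $(u,r)=a\,\overline{u(-1)}-b\,\overline{u(1)}$, whose rank-two cancellation indeed checks out (the terms $\overline{(g,u)}(f,v)+\overline{(f,u)}(g,v)$ coming from $\bar u''$ match the terms $(g,v)(u,f)+(f,v)(u,g)$ coming from $Qv$). Where you genuinely diverge is part (iv). The paper proceeds by explicit construction: a particular solution $v_*=c_1\spr{f}+c_2\spr{g}-\spr{r}+a(x+1)$, a $2\times 2$ linear system for $(c_1,c_2)$ whose consistency is checked against \eqref{SolvabilityOfNHP}, and then corrections by multiples of $\omega$, yielding the closed formulae \eqref{ExplisitSolV}--\eqref{ExplisitSolK0} from which \eqref{NHPUestimate} is read off. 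You instead homogenize the boundary data with a lifting $\psi$ and use the closed-range/bounded-inverse argument for $T$ restricted to $(\ker T)^\perp$ plus elliptic regularity. Your route is shorter, uniform across the three kernel configurations, and avoids case-by-case algebra; the paper's explicit formulae buy more than the bare estimate, namely solutions with prescribed normalizations ($v(\pm 1)=0$ when $\kappa\neq 0$, or $v(-1)=0$ and $(\omega,v)=0$ when $\kappa=0$) that are needed later in the construction of the approximation $Y_\eps$, as well as the refined bound $\|v\|_{W_2^2(\cI)}\leq c\|r\|_{L_2(\cI)}$ of Remark~\ref{RemarkOnSolvability}. One small point of rigor: in case (i) you should note (as follows from Lemma~\ref{LemmaHBS}, since $f_0g_0\neq 0$ forces $\rank A=2$) that $\ker T$ is exactly one-dimensional, spanned by $\sigma$, so that testing against $\sigma$ alone exhausts the orthogonality conditions; the analogous identifications in (ii) and (iii) are built into the hypotheses.
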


\begin{proof}
Parts \textit{(i)--(iii)} are a simple consequence of Fredholm's alternative for
the self-adjoint operator
\begin{equation*}
B_0=-\frac{d^2}{dx^2}+(g,\,\cdot\,) \,f+(f,\,\cdot\,) \,g,
\quad \dom B_0=\big\{h\in W_2^2(\cI)\colon h'(-1)=0, \; h'(1)=0\big\}
\end{equation*}
in  space $L_2(\cI)$. For instance, two solvability conditions \eqref{SolvabilityOfNHP}
can be easy obtained by multiplying equation \eqref{NHProblem} by $1$ and $\overline{\omega}$ in turn and then integrating by parts twice  in view of the boundary conditions. According to
Fredholm's alternative, these conditions are also sufficient.

The problem \eqref{NeumanProblem} has a trivial solution only, if
$\lambda\neq0$. Then \eqref{NHProblem} is uniquely solvable for all $a$, $b$, $r$, and the solution satisfies \eqref{NHPUestimate}.
Otherwise, there are infinitely many solutions of \eqref{NeumanProblem}, and  therefore \eqref{NHProblem} is solvable under the conditions stated above. The proof of part \textit{(iv)} is  similar for all the cases \textit{(i)--(iii)} of non-uniqueness. We will focus our attention on more difficult case \textit{(iii)}. Now since $f_0=0$ and $g_0=0$, antiderivatives  $\fpr{f}$ and $\fpr{g}$ have compact supports lying in $\cI$. Hence
\begin{equation}\label{fgAtpm1}
  \fpr{f}(-1)=0, \quad \fpr{g}(-1)=0, \quad
\fpr{f}(1)=0,\quad\fpr{g}(1)=0.
\end{equation}

Let us find a partial solution of \eqref{NHProblem} of the form
\begin{equation*}
   v_*=c_1\spr f+c_2\spr g-\spr r+a(x+1),
\end{equation*}
where $r^{(-2)}(x)=\int_{-1}^x(x-s)r(s)\,ds$. For all $c_1$ and $c_2$ function $v_*$ satisfies boundary conditions in \eqref{NHProblem}. Indeed, from \eqref{fgAtpm1} and the first solvability condition in \eqref{SolvabilityOfNHP} we see that
\begin{align*}
  &v_*'(-1)=c_1\fpr{f}(-1)+c_2\fpr{g}(-1)-\fpr r(-1)+a=a,\\
  &v_*'(1)=c_1\fpr{f}(1)+c_2\fpr{g}(1)-\fpr r(1)+a=a-(1,r)=b,
\end{align*}
since $\fpr r(1)=(1,r)$.
Let us introduce the temporary notation $n_f$, $n_g$ and $p$ for $\|\fpr{f}\|$, $\|\fpr{g}\|$ and $\la\fpr{f}, \fpr{g}\ra$ respectively.
Next, from \eqref{fgLargeX}, which now holds for $x\geq 1$, we have
$\spr f(1)=-f_1$, $\spr g(1)=-g_1$. Then
\begin{equation}\label{Kappa}
  \kappa=\omega(1)=e^{i\vartheta}n_g \spr f(1)-n_f\spr g(1)=n_fg_1 -e^{i\vartheta}n_gf_1,
\end{equation}
where $\vartheta=\arg (p+1)$. Also
\begin{equation}\label{omegaR}
  (\omega'',\spr r)=-\overline{\kappa} \fpr r(1)+(\omega, r)=-\overline{\kappa} (1,r)+(\omega, r)=-(\kappa-\omega,r).
\end{equation}

Direct substitution $v_*$ into equation \eqref{NHProblem} yields the linear system
\begin{equation}\label{LinSysK}
  \begin{pmatrix}
      n_f^2  & p+1  \\
      \bar{p}+1 & n_g^2
  \end{pmatrix}
  \begin{pmatrix}
    c_1\\c_2
  \end{pmatrix}
  =
  \begin{pmatrix}
  z_1 \\
  z_2
  \end{pmatrix}
\end{equation}
with $z_1=a \bar{f}_1-(f, \spr r)$ and $z_2=a \bar{g}_1-(g, \spr r)$
(cf. \eqref{MatrixAinNfNg} in the proof of Lemma~\ref{LemmaHBS}).
Since $\pi=0$ in this case, we have $|p+1|= n_fn_g$, and
the system  is consistent, if and only if $(p+1)z_2 =n_g^2 z_1$. This condition can be written in the form $e^{i\vartheta}n_fz_2= n_gz_1$.
Recalling \eqref{Kappa} and \eqref{omegaR} gives us
\begin{multline*}
e^{i\vartheta}n_fz_2 -n_gz_1=e^{i\vartheta}n_f(a \bar{g}_1-(g, \spr r))-n_g(a \bar{f}_1-(f, \spr r))
 \\ =e^{i\vartheta}a(n_f\bar{g}_1 -e^{-i\vartheta}n_g\bar{f}_1)
 +e^{i\vartheta}(e^{i\vartheta}n_gf-n_fg, \spr r)\\=e^{i\vartheta}\left(a \overline{\kappa} +(\omega'',\spr r)\right)
 = e^{i\vartheta} \big(a\overline{\kappa}-(\kappa-\omega,r)\big)=0,
\end{multline*}
because  $a\overline{\kappa}=(\kappa,r)+b\overline{\kappa}=(\kappa-\omega, r)$ by  solvability conditions  \eqref{SolvabilityOfNHP}.
Hence system \eqref{LinSysK} is solvable and admits solution
$\vec{c}=(n_f^{-2}z_1,0)$. Therefore
\begin{equation}\label{ExplisitSolV}
  v_*=n_f^{-2}\left(a \bar{f}_1-(f, \spr r)\right)\spr{f}-\spr{r}+a(x+1).
\end{equation}
is a solution of \eqref{NHProblem}.
In the case $\kappa\neq 0$, we can modify  $v_*$ to obtain the solution
\begin{equation}\label{ExplisitSolKneq0}
  v=v_*+\kappa^{-1}\left(f_1n_f^{-2}\big(a \bar{f}_1-(f, \spr r)\big)+\spr{r}(1)-2a\right) \omega
\end{equation}
of   \eqref{NHProblem} satisfying $v(-1)=0$ and $v(1)=0$, as is easy to check. If $\kappa=0$, then we write
\begin{equation}\label{ExplisitSolK0}
v=v_*-(\omega,\omega)^{-1}(\omega,v_* )\,\omega;
\end{equation}
this solution fulfills the following conditions $v(-1)=0$ and
$(\omega,v)=0$.

Estimate \eqref{NHPUestimate} for $v$ immediately follows from   explicit formulae \eqref{ExplisitSolV}, \eqref{ExplisitSolKneq0}  and the continuity of  operator $L_2(\cI)\ni r\mapsto \spr{r}\in W_2^2(\cI)$. Indeed, all terms in \eqref{ExplisitSolV} and \eqref{ExplisitSolKneq0}  contain either $a$ or $\spr r$. For instance, we can estimate
\begin{gather*}
  \big\|a \bar{f}_1 n_f^{-2}\spr{f}\big\|_{W_2^2(\cI)}\leq c_1|a|\big\|\spr{f}\big\|_{W_2^2(\cI)}\leq c_2|a|\|f\|_{L_2(\cI)}\leq c_3|a|, \\
  \begin{aligned}
  \big\|n_f^{-2}(f, \spr r)\spr{f}\big\|_{W_2^2(\cI)}&\leq
  c_4\big\|\spr{f}\big\|_{W_2^2(\cI)}|(f, \spr r)|\\
  &\leq
  c_5 \|f\|_{L_2(\cI)}\big\|\spr{r}\big\|_{L_2(\cI)}
  \leq c_6\|r\|_{L_2(\cI)},
  \end{aligned}
\end{gather*}
etc.
\end{proof}

\begin{rem}\label{RemarkOnSolvability}
In case A1, $\kappa\neq 0$, a slight variant of the proof above provides the estimate
\begin{equation}\label{NHPVestimateOnlyR}
  \|v\|_{W_2^2(\cI)}\leq c\|r\|_{L_2(\cI)}
\end{equation}
for the solution given by \eqref{ExplisitSolKneq0}.  In fact, owing to
\eqref{SolvabilityOfNHP} we see that for any $r\in L_2(\cI)$ there exists a unique pair of numbers
\begin{equation}\label{ABasR}
 a(r)=(1-\kappa^{-1}\omega,r), \qquad b(r)=-\overline{\kappa}^{-1}(\omega, r)
\end{equation}
for which \eqref{NHProblem} is solvable. These numbers
can be regarded as  linear functionals in $L_2(\cI)$. Hence  we have the bounds $|a(r)|+|b(r)|\leq c \|r\|_{L_2(\cI)}$, from which \eqref{NHPVestimateOnlyR} follows.

On the other hand, if $\kappa=0$ in \eqref{SolvabilityOfNHP}, then \eqref{NHProblem} is solvable only for $r$ such that $(\omega, r)=0$.
It is interesting to  note that even  single equation \eqref{NHProblem}, without any boundary conditions, is  unsolvable if this condition does not hold.
\end{rem}

\begin{prop}\label{PropSigmaPrps}
  The half-bound state $\sigma$ given by \eqref{HBS1} possesses the following properties:
\begin{itemize}
  \item[\textit{(i)}] the limit $\sigma_+=\lim_{x\to+\infty}\sigma(x)$ is always a real number;
  \item[\textit{(ii)}] $\sigma_+-\sigma_-=\bar{f}_0\bar{g}_0\,(f_0g_1-f_1g_0)$.
\end{itemize}
\end{prop}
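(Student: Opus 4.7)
The plan is to exploit that $\supp f$ and $\supp g$ lie in $\cI=[-1,1]$, which pins down the behaviour of the second antiderivatives outside $\cI$. For $x>1$ we have $\spr f(x)=f_0 x-f_1$ and $\spr g(x)=g_0 x-g_1$ by \eqref{fgLargeX}, while for $x<-1$ both $\spr f$ and $\spr g$ vanish. Plugging these asymptotics into the defining formula \eqref{HBS1} delivers the limits $\sigma_\pm$ up to the constants $\la f,\spr f\ra$ and $\la g,\spr g\ra$, whose treatment will distinguish the two assertions.

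For part \textit{(ii)} I would just collect terms. The coefficient of $x$ in $\sigma(x)$ for $x>1$ equals $|g_0|^2\bar f_0 f_0-|f_0|^2\bar g_0 g_0=0$, so only the constant parts survive and one reads off
\begin{equation*}
\sigma_+=-|g_0|^2\bar f_0 f_1+|f_0|^2\bar g_0 g_1-|g_0|^2\la f,\spr f\ra+|f_0|^2\la g,\spr g\ra,
\end{equation*}
together with $\sigma_-=-|g_0|^2\la f,\spr f\ra+|f_0|^2\la g,\spr g\ra$. Subtracting produces $\sigma_+-\sigma_-=-|g_0|^2\bar f_0 f_1+|f_0|^2\bar g_0 g_1=\bar f_0\bar g_0(f_0 g_1-f_1 g_0)$, which is exactly the claim.

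For part \textit{(i)} the job is to show that the apparently complex constants in $\sigma_+$ arrange themselves into a real number. I would compute $\la f,\spr f\ra$ by a single integration by parts on $\cI$, using the endpoint values $\fpr f(-1)=\spr f(-1)=0$, $\fpr f(1)=f_0$, $\spr f(1)=f_0-f_1$, which come from $\supp f\subset\cI$ and \eqref{fgLargeX}. The result is $\la f,\spr f\ra=|f_0|^2-\bar f_0 f_1-\int_{-1}^1|\fpr f|^2\,dx$, and an identical identity for $g$. Substituting these into the expression for $\sigma_+$ cancels the terms $\bar f_0 f_1$ and $\bar g_0 g_1$ against their counterparts coming out of integration by parts and leaves
\begin{equation*}
\sigma_+=|g_0|^2\int_{-1}^1|\fpr f|^2\,dx-|f_0|^2\int_{-1}^1|\fpr g|^2\,dx,
\end{equation*}
which is visibly real.

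The only subtlety to flag is that $\fpr f$ and $\fpr g$ need not belong to $L_2(\Real)$ when $f$ or $g$ has nonzero mean, so the handy identity \eqref{ff2=f1f1} from the proof of Lemma~\ref{LemmaHBS} is not available here. The integration by parts must therefore be carried out on the bounded interval $\cI$, where all antiderivatives are square-integrable and have explicit boundary values. Past this bookkeeping the whole argument is essentially mechanical.
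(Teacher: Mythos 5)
Your proof is correct and follows essentially the same route as the paper: both evaluate $\sigma$ at the endpoints of $\cI$ using the explicit form of $\spr{f}$, $\spr{g}$ outside the supports, and both establish reality via the integration by parts on $\cI$ giving $\la f,\spr{f}\ra=\bar f_0(f_0-f_1)-\|\fpr{f}\|^2_{L_2(\cI)}$, which yields $\sigma_+=|g_0|^2\|\fpr{f}\|^2_{L_2(\cI)}-|f_0|^2\|\fpr{g}\|^2_{L_2(\cI)}$ exactly as in the paper. The only cosmetic difference is that you get part \textit{(ii)} purely from the cancellation of the constants $\la f,\spr{f}\ra$ and $\la g,\spr{g}\ra$ in the difference $\sigma_+-\sigma_-$, whereas the paper routes \textit{(ii)} through the same integration-by-parts identity; your flagged caveat about \eqref{ff2=f1f1} being unavailable (since $\fpr{f},\fpr{g}\notin L_2(\Real)$ when $f_0g_0\neq0$) is also exactly how the paper handles it.
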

\begin{proof}
As above, we assume that $f$ and $g$ vanish outside of $\cI$.
Then $\fpr f(1)=f_0$.
We also note  that $\spr f(1)=f_0-f_1$,
by~\eqref{fgLargeX}, and also that
\begin{equation*}
(f,\spr{f})=\bar{f}_0\,(f_0-f_1)-\|\fpr{f}\|^2_{L_2(\cI)},
\end{equation*}
where $\|\,\cdot\,\|_{\cI}$ is a norm in $L_2(\cI)$. In fact,
\begin{equation*}
\int_{-1}^1\bar{f}\spr{f}\,dx
  =\fpr{\bar{f}}(1)\spr{f}(1)-\int_{-1}^1|\fpr{f}|^2\,dx
  =
  \bar{f}_0(f_0-f_1)-\|\fpr{f}\|^2_{\cI}.
\end{equation*}
The same formulae are valid for $g$  as well.
We have
\begin{multline*}
  \sigma_+=\sigma(1)=|g_0|^2\left(\bar{f}_0(f_0-f_1)-(f,\spr{f})\right)
  \\
  -|f_0|^2\left(\bar{g}_0(g_0-g_1)-(g,\spr{g})\right)
  =|g_0|^2\,\|\fpr{f}\|^2_{\cI}-|f_0|^2\,\|\fpr{g}\|^2_{\cI}.
\end{multline*}
and therefore $\sigma_+\in\Real$. Next write
\begin{multline*}
  \sigma_-=\sigma(-1)=-|g_0|^2\,(f,\spr{f})+|f_0|^2\,(g,\spr{g}) \\
  =-|g_0|^2\,\left(\bar{f}_0\,(f_0-f_1)-\|\fpr{f}\|^2_{L_2(\cI)}\right)
  +|f_0|^2\,\left(\bar{g}_0\,(g_0-g_1)-\|\fpr{g}\|^2_{L_2(\cI)}\right)\\
  =|g_0|^2\,\|\fpr{f}\|^2_{\cI}-|f_0|^2\,\|\fpr{g}\|^2_{\cI}+
  |g_0|^2\bar{f}_0f_1-|f_0|^2\bar{g}_0g_1\\=
  \sigma_+-\bar{f}_0\bar{g}_0\,(f_0g_1-f_1g_0),
\end{multline*}
which establishes \textit{(ii)}.
\end{proof}

At the end of the section,  we record some technical assertion.
Let $[w]_{\xi}$ denote  the jump $w(\xi+0)-w(\xi-0)$ of  function $w$ at a point $\xi$.

\begin{prop}\label{PropW22Corrector}
Let $U$ be the real line with two removed points $x=-\eps$ and $x=\eps$, i.e., $U=\Real\setminus \{-\eps,\eps\}$.
Assume that function $w\in W_{2, loc}^2(U)$ along with its first derivative has jump discontinuities at points $x=-\eps$ and $x=\eps$. There exists a function $\rho\in C^\infty(U)$ such that   $w+\rho$ belongs to $W_{2, loc}^2(\Real)$. Moreover, $\rho$ is a function of compact support, $\rho$ vanishes in $(-\eps,\eps)$  and
    \begin{equation}\label{REst}
        |\rho^{(k)}(x)|\leq C \Bigl(\bigl|[w]_{-\eps}\bigr|+\bigl|[w]_{\eps}\bigr|
        +\bigl|[w']_{-\eps}\bigr|+\bigl|[w']_{\eps}\bigr|\Bigr)
    \end{equation}
for $|x|\geq \eps$,  $k=0,1,2$, where the constant $C$ does not depend on $w$ and $\eps$.
\end{prop}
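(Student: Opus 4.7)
The plan is to build $\rho$ explicitly as a piecewise-smooth corrector assembled from two fixed profiles that, separately, kill the value-jump and the derivative-jump at each endpoint. Choose once and for all $\phi,\psi\in C_c^\infty(\Real)$ with supports in $[-1,1]$ and with the normalisations
\[
\phi(0)=1,\quad \phi'(0)=0,\quad \psi(0)=0,\quad \psi'(0)=1.
\]
Abbreviate the four jumps as $J_-^0=[w]_{-\eps}$, $J_-^1=[w']_{-\eps}$, $J_+^0=[w]_{\eps}$, $J_+^1=[w']_{\eps}$, and set
\[
\rho(x)=\begin{cases}
J_-^0\,\phi(x+\eps)+J_-^1\,\psi(x+\eps), & x<-\eps,\\
0, & -\eps<x<\eps,\\
-J_+^0\,\phi(x-\eps)-J_+^1\,\psi(x-\eps), & x>\eps.
\end{cases}
\]

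First I would observe that $\rho$ is smooth on each of the three open subintervals and vanishes outside $[-1-\eps,1+\eps]$, so $\rho\in C^\infty(U)$ and $\rho$ is compactly supported. Next I would compute the one-sided limits at $\pm\eps$: the normalisations of $\phi$ and $\psi$ at $0$ give
\[
\rho(-\eps-0)=J_-^0,\quad \rho'(-\eps-0)=J_-^1,\quad \rho(\eps+0)=-J_+^0,\quad \rho'(\eps+0)=-J_+^1,
\]
while $\rho\equiv 0$ on $(-\eps,\eps)$. Hence $[\rho]_{\pm\eps}=-[w]_{\pm\eps}$ and $[\rho']_{\pm\eps}=-[w']_{\pm\eps}$, so that $w+\rho$ and $(w+\rho)'$ are continuous across both points after continuous extension.

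From here I would conclude that $w+\rho\in W_{2,loc}^2(\Real)$: the continuous extension of $w+\rho$ is locally absolutely continuous with locally absolutely continuous first derivative, and its distributional second derivative coincides a.e.\ with the classical one, which is locally square integrable since $w''\in L_{2,loc}(U)$ and $\rho''$ is bounded with compact support. The bound \eqref{REst} then follows immediately from the explicit formula: for $|x|\geq\eps$ and $k=0,1,2$,
\[
|\rho^{(k)}(x)|\leq\bigl(\|\phi^{(k)}\|_\infty+\|\psi^{(k)}\|_\infty\bigr)\bigl(|J_-^0|+|J_-^1|+|J_+^0|+|J_+^1|\bigr),
\]
and the prefactor is independent of $\eps$ and of $w$ because $\phi,\psi$ were chosen prior to the construction and translation by $\pm\eps$ does not affect sup-norms of derivatives.

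There is no serious obstacle in this argument; the only subtlety is the simultaneous cancellation of value and derivative jumps at each of the two points, which is precisely why two independent profiles $\phi$ and $\psi$, with $(\phi(0),\phi'(0))$ and $(\psi(0),\psi'(0))$ forming a basis of $\Real^2$, are used, and why they are translated rather than rescaled so that the resulting constants stay uniform in $\eps$.
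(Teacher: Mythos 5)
Your proof is correct and takes essentially the same approach as the paper: an explicit corrector built from two fixed profiles normalized by $(\phi(0),\phi'(0))=(1,0)$ and $(\psi(0),\psi'(0))=(0,1)$, translated to the points $\pm\eps$ with coefficients equal to the jumps, so that the value and derivative jumps of $w$ are cancelled and the bound \eqref{REst} follows from the explicit formula with constants independent of $\eps$ and $w$. The only cosmetic difference is that the paper uses one-sided profiles supported in $[0,\infty)$ (composed with reflections/translations in a single global formula), whereas you use globally smooth compactly supported bumps and a piecewise definition; the two constructions are equivalent.
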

\begin{proof}
Let us introduce functions $\phi$ and $\psi$ that are smooth outside the origin, have compact supports contained in $[0,\infty)$, and such that $\phi(+0)=1$, $\phi'(+0)=0$, $\psi(+0)=0$ and $\psi'(+0)=1$.
We set
\begin{equation*}
\rho(x)=[w]_{-\eps}\, \phi(-x-\eps)-[w']_{-\eps}\,\psi(-x-\eps)\\
-[w]_{\eps}\,\phi(x-\eps)-[w']_{\eps}\,\psi(x-\eps).
\end{equation*}
By construction,  $\rho$ has a compact support and vanishes in $(-\eps,\eps)$. An easy computation also shows that
\begin{equation*}
  [\rho]_{-\eps}=-[w]_{-\eps}, \quad[\rho]_\eps=-[w]_\eps, \quad[\rho']_{-\eps}=-[w']_{-\eps}, \quad [\rho']_\eps=-[w']_\eps.
\end{equation*}
Therefore $w+\rho$ is continuous on~$\Real$ along with the first derivative and consequently belongs to $W_{2, loc}^2(\Real)$.
Finally, the explicit formula for $\rho$  makes it obvious that inequality \eqref{REst} holds.
\end{proof}

\section{Proof of Theorem \ref{ThmC}}

\subsection{How to guess the limit operator}\label{Subsect41}
Given $h\in L_2(\Real)$ and $\zeta\in \mathbb{C}$ with  $\Im\zeta\neq0$, we set $y_\eps=(H_\eps-\zeta)^{-1}h$.
Let us find a formal asymptotics of $y_\eps$, as $\eps\to 0$, in the form
\begin{equation}\label{AsymptoticsYepsC1}
 y_\eps(x)\sim
  \begin{cases}
      y(x)+\cdots   & \text{if }|x|>\eps,\\
      u\xe+\eps v\xe+\cdots & \text{if }  |x|<\eps,
 \end{cases}
\end{equation}
provided the coupling conditions $[y_\eps]_{\pm\eps}=0$, $[y'_\eps]_{\pm\eps}=0$ hold.
Function $y_\eps$ is a $L_2(\Real)$-solution of the equation
\begin{equation*}
 -y_\eps''+V(x)y_\eps+\eps^{-3}Q_\eps y_\eps+\eps^{-1}q(\eps^{-1} x)y_\eps=\zeta y_\eps+h\quad  \text{in }  \Real.
\end{equation*}
Since the interval  on which the perturbation is localized shrinks to a point,    $y$ must solve the equation
\begin{equation}\label{C1AsymptoticsEq}
-y''+V(x)y=\zeta y+h \qquad\text{in } \Real\setminus\{0\}
\end{equation}
and, of course, it must belong to $L_2(\Real)$. This solution can not be uniquely determined without additional conditions at the origin. One naturally expects that these conditions depend on the perturbation.

Set $t=\eps^{-1}x$ and $z_\eps(t)=y_\eps(\eps t)$.
Then, for $|t|<1$, we have
\begin{equation*}
  -\frac{d^2z_\eps}{dt^2}+(g, z_\eps)\, f(t)+(f,z_\eps) \,g(t) +\eps q(t)z_\eps=\eps^2 \big(\zeta z_\eps+V(\eps t)+h(\eps t) \big)
\end{equation*}
Since $z_\eps\sim u+\eps v+\cdots$, we see that
$-u''+Qu=0$ and $-v''+Qv=-qu$ for $t\in\cI$, where $Q=(g,\,\cdot\,) \,f+(f,\,\cdot\,)\,g$ is a rank-two operator in $L_2(\cI)$.
Next, the asymptotic equalities $y(\pm\eps)\sim u(\pm1)+\eps v(\pm 1)+\cdots$ and $y'(\pm\eps)\sim \eps^{-1}u'(\pm1)+v'(\pm 1)+\cdots$
imply in particular that
\begin{equation}\label{CouplingYV0}
y_-=u(-1),\qquad y_+=u(1),
\end{equation}
and also that  $u'(-1)=0$, $u'(1)=0$, $v'(-1)=y'_-$ and $v'(1)=y'_+$.
Here and subsequently, $y_\pm=y(\pm 0)$ and $y'_\pm=y'(\pm 0)$.

Com\-bi\-ning the equalities above, we obtain two boundary value problems
\begin{align}\label{bvpV0}
  &-u''+Qu=0,\quad t\in\cI,  && u'(-1)=0, \quad u'(1)=0;
  \\\label{bvpV1}
  &-v'' +Qv=-q u,\quad t\in\cI,  && v'(-1)=y'_-, \quad v'(1)=y'_+.
\end{align}

\smallskip
\paragraph{\textit{Case A1}}
In view of Lemma~\ref{LemmaHBS} \textit{(iii)} problem \eqref{bvpV0} has the two-dimensional space of solutions generated by $1$ and $\omega$.
We set
\begin{equation}\label{VInCaseKappaNotZero}
  u(t)=y_-+\kappa^{-1}\big(y_+-y_-\big)\,\omega(t),\quad t\in\cI,
\end{equation}
provided $\kappa\neq 0$. Recall that $\omega(1)=\kappa$. Hence, $u$ is a restriction of  half-bound state to $\cI$ such that $u(-1)=y_-$ and $u(1)=y_+$. Problem \eqref{bvpV1} with the introduced $u$ in the right hand side of the equation is solvable if conditions \eqref{SolvabilityOfNHP} hold, namely
$y'_--y'_+=-(1,q u)$ and
$\overline{\kappa} \,y'_+ =(\omega,q u)$.
We now  substitute \eqref{VInCaseKappaNotZero} into the last equalities and recall notation \eqref{Ak}. After some calculations we thus  write the solvability conditions  in the matrix form
\begin{equation}\label{MatrixEquality}
  \begin{pmatrix}
    \frac{a_1}{\kappa} & -1\\
    \frac{a_2}{|\kappa|^2} & -1\\
  \end{pmatrix}
  \begin{pmatrix}
    y_+\\ y'_+
  \end{pmatrix}=
  \begin{pmatrix}
    \frac{a_1-\kappa a_0}{\kappa} & -1\\
    \frac{a_2-\kappa \bar{a}_1}{|\kappa|^2} & 0\\
  \end{pmatrix}
  \begin{pmatrix}
    y_-\\ y'_-
  \end{pmatrix}.
\end{equation}
Since $a_2\neq\overline{\kappa} a_1$ in  case A1, the matrix on the left is invertible. From this we  deduce
\begin{equation}\label{C1CouplingCnds}
  \begin{pmatrix}
    y_+\\ y'_+
  \end{pmatrix}=\frac{1}{a_2-\overline{\kappa} a_1}
    \begin{pmatrix}
     |\kappa|^2a_0-2{\rm Re}(\overline{\kappa} a_1)+a_2
     &   |\kappa|^2\\ a_0a_2-|a_1|^2 &    a_2
     \end{pmatrix}
  \begin{pmatrix}
    y_-\\ y'_-
  \end{pmatrix}.
\end{equation}
As $a_2-\overline{\kappa} a_1=e^{i \arg (a_2-\overline{\kappa} a_1)}|a_2-\overline{\kappa} a_1|$ and $e^{-i \arg (a_2-\overline{\kappa} a_1)}=e^{i \arg (a_2-\kappa \bar{a}_1)}$, we see that function $y$ in asymptotics \eqref{AsymptoticsYepsC1} must be a solution of \eqref{C1AsymptoticsEq} belonging to $\mathcal{V}$ and satisfying conditions \eqref{MatrixHinC1}. Since coupling conditions \eqref{C1CouplingCnds} are simultaneously the sol\-va\-bility conditions for \eqref{bvpV1}, there exists a solution $v$ of this problem defined up to terms $c_1+c_2\omega$. We can fix $v$ such that $v(-1)=0$ and $v(1)=0$ (see \eqref{ExplisitSolKneq0}).

As we see in Fig.~\ref{FigBranchingAlgorithm}, there is  also another path going to node A1, which is described by  conditions   $\kappa=0$ and $a_2\neq0$. Since $\kappa=0$, half-bound state $\omega$ now vanishes not only at $t=-1$, but also at $t=1$.  Hence $\omega$ is a bound state  of operator $B$. Then for any solution $u=c_1+c_2 \omega$ of \eqref{bvpV0} we have $u(-1)=u(1)=c_1$. Therefore
\begin{equation}\label{C1Kappa=0Cnd0}
   y_+=y_-
\end{equation}
by \eqref{CouplingYV0},  and $u=y(0)+c_2\omega$. As above, applying  solvability conditions \eqref{SolvabilityOfNHP} to problem \eqref{bvpV1} yields
\begin{equation}\label{SolvCondsKappaZero}
y'_+-y'_-=a_0y(0)+a_1c_2,\qquad \bar{a}_1y(0)+a_2c_2=0,
\end{equation}
 from which we have
\begin{equation}\label{C1Kappa=0Cnd1}
  y'_+=y'_-+a_2^{-1}(a_0a_2-|a_1|^2)\:y(0).
\end{equation}
Hence, in the case $\kappa=0$, the leading term $y$ in asymptotics for $y_\eps$ is a solution of \eqref{C1AsymptoticsEq} obeying conditions \eqref{C1Kappa=0Cnd0} and \eqref{C1Kappa=0Cnd1} (cf. coupling conditions \eqref{MatrixHinC1} for
$\kappa=0$). We  also have $u=y(0)\left(1-\bar{a}_1a_2^{-1}\,\omega\right)$.
Our choice of $y$ ensures the solvability of \eqref{bvpV1}; we also fix $v$ as in \eqref{ExplisitSolK0}.

\smallskip
\paragraph{\textit{Case A2}} Now problem \eqref{bvpV0} admits a one-parametric family of solutions $u=c_0\sigma$, where $\sigma$ is given by \eqref{HBS1}. Applying \eqref{CouplingYV0} and solvability condition \eqref{SolvabilityOfNHPSigma} for problem \eqref{bvpV1}, we deduce
$y_-=c_0\sigma_-$, $ y_+=c_0\sigma_+$ and $\bar{\sigma}_+y'_+-\bar{\sigma}_-y'_-=c_0\sigma_*$, because the limits
$\sigma_-$ and $\sigma_+$ in \eqref{sigmapm} coincide with values $\sigma(-1)$ and $\sigma(1)$ respectively. Since both the limits $\sigma_-$ and $\sigma_+$ are different from zero, we have
\begin{equation*}
c_0=\frac{y_+}{\sigma_+}=\frac{y_-}{\sigma_-}, \qquad
  \bar{\sigma}_+y'_+-\bar{\sigma}_-y'_-=\frac{\sigma_*}{\sigma_-}\,y_-.
\end{equation*}
From this we readily deduce the conditions
\begin{equation*}
 y_+=\frac{\sigma_+}{\sigma_-}\,y_-, \qquad y'_+=\frac{\bar{\sigma}_-}{\sigma_+}\,y'_-
 +\frac{\sigma_*}{\sigma_+\sigma_-}\,y_-,
\end{equation*}
since $\sigma_+$ is real by Proposition~\ref{PropSigmaPrps} \textit{(i)}.
The last equalities can be written in matrix form \eqref{MatrixHinA2} using identities $\sigma_-=e^{i\arg \sigma_-}|\sigma_-|$ and $\bar{\sigma}_-=e^{-i\arg \sigma_-}|\sigma_-|$. Then problem \eqref{bvpV1} admits a solution $v$ such that $(\sigma, v)=0$.

\smallskip
\paragraph{\textit{Case A3}}
There two paths going to node A3 in the graph (see Fig.~\ref{FigBranchingAlgorithm}).  If $\pi=0$, $\kappa=0$, $a_1=0$ and $a_2=0$, then \eqref{C1Kappa=0Cnd0},
\eqref{SolvCondsKappaZero} reduce to the coupling conditions
\begin{equation}\label{CondsA2}
  y_+=y_-, \qquad y'_+=y'_-+a_0y(0)
\end{equation}
that correspond to  point interactions \eqref{MatrixHinC2}. In this case,  $u=y(0)$ and $v$ solves \eqref{bvpV1} and satisfies additional conditions $v(-1)=0$ and $(v,\omega)=0$.

Going the other way, for which $\pi\neq 0$,
we have that $u$ is a constant function, in view of Lemma~\ref{LemmaHBS} \textit{(ii)}.
Then \eqref{CouplingYV0} imply $y_+=y_-$ and $u=y(0)$. Next,  $v$ must be a solution of the problem
\begin{equation*}
  -v'' +Qv=-y(0)q,\quad t\in\cI, \quad  v'(-1)=y'_-, \quad v'(1)=y'_+,
\end{equation*}
which is solvable iff the second condition in \eqref{CondsA2} holds.
Hence, in this case  $y$ also satisfies the conditions \eqref{CondsA2}.
We fix a solution of \eqref{bvpV1} by condition $v(-1)=0$.

\subsection{Uniform approximation}

The basic idea of the proof is to construct a good approximation to  $y_\eps=(H_\eps-\zeta)^{-1}h$, uniformly for $h$ in bounded subsets of $L_2(\Real)$.
In addition, this approximation must belong to the domain of $H_\eps$.
The function $y=(\mathcal{H}-\zeta)^{-1}h$ is a  satisfactory approximation to $y_\eps$ for $|x|>\eps$, whereas the problem of finding a close approximation  on the support $(-\eps,\eps)$ of  perturbation is rather subtle. Recall that $\mathcal{H}$ stands for the limit operator in the case under study.

\smallskip
\paragraph{\textit{Case A1}}
Let $m$ be  a $L_2(\cI)$-function of zero mean such that
$(\omega,m)=1$. We consider the problem involving three parameters $\alpha_\eps$, $\beta_\eps$ and $\gamma_\eps$
\begin{equation}\label{C2ProblemVe1}
  -\vartheta_\eps ''+Q\, \vartheta_\eps=h(\eps \,\cdot)+\gamma_\eps m \quad \text{in } \cI,\qquad
   \vartheta'_\eps(-1)=\alpha_\eps,\quad \vartheta'_\eps(1)=\beta_\eps.
\end{equation}
The problem is solvable if and only if
\begin{equation*}
\kappa\alpha_\eps+\gamma_\eps=(\kappa-\omega,h(\eps \,\cdot)),
\qquad
\alpha_\eps-\beta_\eps=(1, h(\eps \,\cdot)),
\end{equation*}
provided operator $B$ has the double zero-energy resonance.

Assume  that  $\kappa\neq 0$. Let us introduce  function $\vartheta_\eps$ as a solutions of \eqref{C2ProblemVe1} with $\gamma_\eps=0$.
Then $\alpha_\eps$ and $\beta_\eps$ can be uniquely defined
$\alpha_\eps(h)=(1-\kappa^{-1}\omega,h(\eps \,\cdot))$, $\beta_\eps(h)=-\overline{\kappa}^{-1}(\omega, h(\eps \,\cdot))$
for given $h$ (see \eqref{ABasR}).
By \eqref{ExplisitSolKneq0}, there exists a unique solution $\vartheta_\eps$ satisfying the additional conditions $\vartheta_\eps(-1)=0$ and $\vartheta_\eps(1)=0$.

Set $w_\eps(x)=y(x)$ if $|x|>\eps$ and $w_\eps(x)=u\xe+\eps v\xe+\eps^2 \vartheta_\eps\xe$ if $|x|<\eps$.
By construction, $w_\eps$ belongs to $W_{2,loc}^2(\Real\setminus\{-\eps,\eps\})$, but this function is in general discontinuous at the points $x=\pm\eps$; its jumps and the jumps of its first derivative are small enough  as we will show below. This observation allows us to correct $w_\eps$  to a $W_{2,loc}^2(\Real)$-function by a small perturbation.
 We can find $\rho_\eps$ such that
\begin{equation}\label{YepsA1KappaNot0}
  Y_\eps(x)=w_\eps(x)+\rho_\eps(x)
 =\begin{cases}
      y(x)+\rho_\eps(x)   & \text{if }|x|>\eps,\\
      u\xe+\eps v\xe+\eps^2 \vartheta_\eps\xe& \text{if }  |x|<\eps
  \end{cases}
\end{equation}
belongs to $W_{2,loc}^2(\Real)$, by Proposition~\ref{PropW22Corrector}.  Recall that $\rho_\eps$ is zero in $(-\eps,\eps)$. Obviously $Y_\eps$ also belongs to the domain of $H_\eps$, since $y\in \mathcal{V}$ and $\rho_\eps$ has a compact support.

Now we suppose that  $\kappa=0$.
According to the second part of Remark~\ref{RemarkOnSolvability}, the solvability of \eqref{C2ProblemVe1} can not be ensured by parameters $\alpha_\eps$ and $\beta_\eps$ only. We can find  $\vartheta_\eps$  by setting $\alpha_\eps(h)=0$, $\beta_\eps(h)=-(1, h(\eps \,\cdot))$ and
$\gamma_\eps=-(\omega,h(\eps \,\cdot))$.
Then the problem  admits a unique solution such that $\vartheta_\eps(-1)=0$ and $(\omega, \vartheta_\eps)=0$, by \eqref{ExplisitSolK0}. Finally we define $Y_\eps\in \dom H_\eps$ by \eqref{YepsA1KappaNot0},  as for $\kappa\neq 0$.

\smallskip

\paragraph{\textit{Case A3}}
Approximation \eqref{YepsA1KappaNot0} constructed above for the case A1 with $\kappa=0$ is also suitable when $a_1=a_2=0$.
In the case when $\pi\neq0$, the double zero-energy resonance for $B$ is absent.  In view of Lemma~\ref{LemmaHBS} \textit{(ii)}, the only constant functions are half-bound states of $B$. Hence  \eqref{C2ProblemVe1} admits a solution if $ \alpha_\eps-\beta_\eps=(1,h(\eps \,\cdot))$
by Proposition~\ref{PropSolvabilityOfNHP} \textit{(ii)}.
We set $\alpha_\eps(h)=0$, $\gamma_\eps(h)=0$ and $\beta_\eps(h)=-(1, h(\eps \,\cdot))$, and fix the solution $\vartheta_\eps$ by additional condition $\vartheta_\eps(-1)=0$.

\smallskip
\paragraph{\textit{Case A2}} In this case operator $B$ possesses the half-bound state $\sigma$. Problem \eqref{C2ProblemVe1} admits a solution if and only if $\alpha_\eps(h)\bar{\sigma}_--\beta_\eps(h)\sigma_+=(\sigma, h(\eps \,\cdot))+\gamma_\eps(h) (\sigma, m)$. Set $\alpha_\eps(h)=0$, $\gamma_\eps(h)=0$ and $\beta_\eps(h)=-\sigma_+^{-1}(\sigma, h(\eps \,\cdot))$, for instance. Then we choose a solution $\vartheta_\eps$ in \eqref{YepsA1KappaNot0} such that $\vartheta_\eps(-1)=0$.

\smallskip
Regardless of the case under consideration, the values  $\alpha_\eps$, $\beta_\eps$ and $\gamma_\eps$  can be estimated by the norm of $h$:
\begin{equation}\label{EstAlphaBetaGamma}
  |\alpha_\eps(h)|+|\beta_\eps(h)|+|\gamma_\eps(h)|\leq c_{1}\|h(\eps\cdot)\|_{L_2(\cI)}\leq c_{2}\eps^{-1/2} \|h\|.
\end{equation}
Here we used the obvious estimate
\begin{equation*}
\int_{-1}^1|h(\eps s)|^2\,ds=\eps^{-1}\int_{-\eps}^\eps|h(x)|^2\,dx\leq \eps^{-1}\int_{\Real}|h(x)|^2\,dx.
\end{equation*}

\subsection{Remainder estimates}
We will show that $Y_\eps$ solves the equation  $$(H_\eps-\zeta)Y_\eps=h+r_\eps,$$
where the remainder term $r_\eps$ is small in $L_2$-norm uniformly with respect to $h$. Let us compute  $r_\eps$. If $|x|>\eps$, then we have
\begin{equation*}
  r_\eps(x)=\big( -\tfrac{d^2}{dx^2}+V(x)-\zeta\big)\big(y(x)
  +\rho_\eps(x)\big)-h(x)=-\rho_\eps''(x) +(V(x)-\zeta)\rho_\eps(x),
\end{equation*}
by \eqref{C1AsymptoticsEq}. If $|x|<\eps$, then
\begingroup
\allowdisplaybreaks
\begin{align*}
   r_\eps(x)  &=
     -\frac{d^2}{dx^2}\big(Y_\eps\xe\big)+(V(x)-\zeta) Y_\eps\xe\\
     &+\eps^{-3}\int\limits_{-\eps}^\eps\big( \bar{g}\se f\xe+\bar{f}\se g\xe\big)Y_\eps\se\,d\tau\,
     +\eps^{-1}q\xe Y_\eps\xe-h(x)\\
     &= \eps^{-2} \big(-u''\xe+(Qu)\xe\big)
       +\eps^{-1} \big(-v''\xe+(Qv)\xe+ q\xe u\xe\big)\\
     &  +\big(-\vartheta_\eps''\xe+(Q\vartheta_\eps)\xe-h(x)-\gamma_\eps m\xe\big)+q\xe v\xe+\gamma_\eps m\xe\\
     &+(V(x)-\zeta) Y_\eps\xe=q\xe v\xe+\gamma_\eps m\xe+(V(x)-\zeta) Y_\eps\xe,
\end{align*}%
\endgroup
by \eqref{bvpV0}, \eqref{bvpV1} and \eqref{C2ProblemVe1}. Hence
\begin{equation*}
  r_\eps(x)=
      \begin{cases}
        -\rho_\eps''(x)+(V(x)-\zeta)\rho_\eps(x), & \text{if } |x|>\eps,\\
        q\xe v\xe+\gamma_\eps m\xe+(V(x)-\zeta) Y_\eps\xe, & \text{if } |x|<\eps.
      \end{cases}
\end{equation*}
We will prove that $r_\eps$ is small in the $L_2(\Real)$-norm and
also show that the non-zero contribution in the norm $\|Y_\eps\|$, as $\eps\to 0$, is produced by the function $y$ only.

\begin{prop}\label{PropRepsSeps}
For all $h\in L_2(\Real)$ functions $r_\eps=(H_\eps-\zeta)Y_\eps-h$ and $s_\eps=Y_\eps-y$ satisfy the estimate
\begin{equation*}
    \|r_\eps\|+\|s_\eps\|\leq c \eps^{1/2} \|h\|,
\end{equation*}
where the constant $c$ does not depend on $h$ and $\eps$.
\end{prop}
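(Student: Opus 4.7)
My plan is to split both $r_\eps$ and $s_\eps$ according to the partition $\Real = (-\eps,\eps) \cup \{|x|>\eps\}$ and handle the two pieces with complementary tools: on the outer region the problem reduces to bounding the corrector $\rho_\eps$ via Proposition~\ref{PropW22Corrector}, while on the inner region every appearing term is of the form $F(x/\eps)$ whose $L_2$-norm picks up a volume factor $\sqrt{\eps}\,\|F\|_{L_2(\cI)}$ that will absorb the $\eps^{-1/2}$ loss inherent in \eqref{EstAlphaBetaGamma}. The case-by-case construction of $u$, $v$, $\vartheta_\eps$ in Subsection~\ref{Subsect41} matters only through the uniform bounds $\|u\|_{W_2^2(\cI)} + \|v\|_{W_2^2(\cI)} \leq C\|h\|$ from \eqref{NHPUestimate} and $\|\vartheta_\eps\|_{W_2^2(\cI)} \leq C\eps^{-1/2}\|h\|$ from \eqref{EstAlphaBetaGamma} and \eqref{NHPUestimate}, together with Sobolev embedding $W_2^2(\cI)\hookrightarrow L_\infty(\cI)$.

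For the outer piece the excerpt has already reduced $r_\eps$ to $-\rho_\eps'' + (V-\zeta)\rho_\eps$ and $s_\eps$ to $\rho_\eps$, so everything hinges on estimating the four jumps $[w_\eps]_{\pm\eps}$ and $[w_\eps']_{\pm\eps}$. Since $y\in \dom\mathcal{H}$ satisfies $\|y\|_{W_2^2(\Real\setminus\{0\})}\leq C\|h\|$, Taylor expansion at $0\pm$ gives $y(\pm\eps) = y_\pm + O(\eps\|h\|)$ and $y'(\pm\eps) = y'_\pm + O(\sqrt{\eps}\|h\|)$. The inner side of $w_\eps$ at $\pm\eps$ equals $u(\pm 1) + \eps v(\pm 1) + \eps^2\vartheta_\eps(\pm 1)$ with $x$-derivative $\eps^{-1}u'(\pm 1) + v'(\pm 1) + \eps\vartheta_\eps'(\pm 1)$; the boundary data built into \eqref{bvpV0}, \eqref{bvpV1}, \eqref{C2ProblemVe1} have been arranged so that $u(\pm 1) = y_\pm$, $u'(\pm 1) = 0$, $v'(\pm 1) = y'_\pm$, and the residual error is $O(\eps\|h\|)$ in value and $O(\sqrt{\eps}\|h\|)$ in derivative, the worst contribution being $\eps\vartheta_\eps'(\pm 1) = \eps\alpha_\eps$ or $\eps\beta_\eps$, which by \eqref{EstAlphaBetaGamma} is indeed $O(\sqrt{\eps}\|h\|)$. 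Feeding these jumps into Proposition~\ref{PropW22Corrector} and noting that $\rho_\eps$ has compact support in a bounded set independent of $\eps$ yields $\|\rho_\eps\|_{L_2} + \|\rho_\eps''\|_{L_2} \leq C\sqrt{\eps}\|h\|$, which disposes of both $r_\eps$ and $s_\eps$ on $\{|x|>\eps\}$.

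On $(-\eps,\eps)$ the explicit formula for $r_\eps$ together with $\|F(\cdot/\eps)\|_{L_2(-\eps,\eps)} = \sqrt{\eps}\,\|F\|_{L_2(\cI)}$ and the Sobolev $L_\infty$-bounds on $u$, $v$ gives $\|q(\cdot/\eps)v(\cdot/\eps)\|_{L_2} + \|(V-\zeta)u(\cdot/\eps)\|_{L_2} \leq C\sqrt{\eps}\|h\|$, while the $\eps$- and $\eps^2$-prefixed pieces $(V-\zeta)\eps v(\cdot/\eps)$ and $(V-\zeta)\eps^2\vartheta_\eps(\cdot/\eps)$ are strictly smaller (the last using $\|\vartheta_\eps\|_{L_\infty}\leq C\eps^{-1/2}\|h\|$ against the prefactor $\eps^2$, yielding $O(\eps^2\|h\|)$). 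The same volume trick controls $s_\eps$ on $(-\eps,\eps)$: both $y$ and $u(\cdot/\eps)+\eps v(\cdot/\eps)+\eps^2\vartheta_\eps(\cdot/\eps)$ are $L_\infty$-bounded by $C\|h\|$ on this interval, so their $L_2$-difference is at most $C\sqrt{\eps}\|h\|$.

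The main obstacle is the $\gamma_\eps m(x/\eps)$ contribution to $r_\eps$, which appears only in the subcases where the $\vartheta_\eps$-equation is unsolvable by choice of $\alpha_\eps, \beta_\eps$ alone, namely case~A1 with $\kappa = 0$ and the corresponding branch of A3. Since \eqref{EstAlphaBetaGamma} only supplies $|\gamma_\eps|\leq C\eps^{-1/2}\|h\|$, the naive bound $\|\gamma_\eps m(\cdot/\eps)\|_{L_2} \leq |\gamma_\eps|\sqrt{\eps}\|m\|_{L_2(\cI)} \leq C\|h\|$ is off by the desired $\sqrt{\eps}$. To recover the correct scaling I would exploit the zero-mean property $(1,m) = 0$, writing $m = M'$ for a compactly supported $M\in W_2^1(\cI)$ and augmenting the inner ansatz by a higher-order corrector of the form $\eps^2\gamma_\eps N(x/\eps)$ with $-N'' = m$, so that applying $-d^2/dx^2$ to the new piece produces exactly $\gamma_\eps m(x/\eps)$ and cancels the bad term, leaving only $\eps^{-3}Q_\eps$- and $\eps^{-1}q(x/\eps)$-generated remainders that involve extra powers of $\eps$ and so inherit the $\sqrt{\eps}\|h\|$ bound. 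One must re-verify that the modified ansatz does not spoil the boundary matching of the previous step, but the perturbation introduced is uniformly $O(\eps^{3/2}\|h\|)$ in $C^1$, well within tolerance.
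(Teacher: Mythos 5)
Everything up to your final paragraph coincides with the paper's own proof and is sound: the inner/outer splitting, the bounds $\|u\|_{L_2(\cI)}+\|v\|_{W_2^2(\cI)}\leq C\|h\|$ and $\|\vartheta_\eps\|_{W_2^2(\cI)}\leq C\eps^{-1/2}\|h\|$, the jump estimates \eqref{C2Jumps}, \eqref{EstY(Eps)-Y(0)} fed into Proposition~\ref{PropW22Corrector}, and the volume factor $\eps^{1/2}$ on $(-\eps,\eps)$ are exactly the paper's steps. The genuine gap is your repair of the term $\gamma_\eps m\xe$. The corrector $\eps^{2}\gamma_\eps N(\eps^{-1}x)$ is not acted on by $-d^{2}/dx^{2}$ alone; it is also hit by the singularly scaled rank-two term, and the scaling identity $Q_\eps\bigl[F(\eps^{-1}\cdot)\bigr](x)=\eps\,(QF)(\eps^{-1}x)$ gives
\begin{equation*}
  \eps^{-3}Q_\eps\bigl[\eps^{2}\gamma_\eps N(\eps^{-1}\cdot)\bigr](x)
  =\gamma_\eps\,(QN)\xe ,
\end{equation*}
a term with \emph{no} extra power of $\eps$: its $L_2(\Real)$-norm is $|\gamma_\eps|\,\eps^{1/2}\|QN\|_{L_2(\cI)}$, which is $O(\|h\|)$ and not $O(\eps^{1/2}\|h\|)$, because $|\gamma_\eps|$ is genuinely of order $\eps^{-1/2}\|h\|$ (take $h$ supported in $(-\eps,\eps)$ and proportional to $\omega(x/\eps)$ there). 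The factor $\eps^{-3}$ is calibrated precisely so that $Q$ enters every order of the inner expansion on an equal footing with $-d^{2}/dt^{2}$, so your claim that the $Q_\eps$-generated remainders ``involve extra powers of $\eps$'' fails, and the new term is exactly as bad as the one you removed.

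No choice of $N$ (or of $m$) can rescue this route. To cancel $\gamma_\eps m\xe$ the corrector would have to satisfy the full equation $-N''+QN=-m$ in $\cI$, and by Remark~\ref{RemarkOnSolvability} this equation is unsolvable --- with any boundary conditions or none --- whenever $(\omega,m)\neq 0$; here $(\omega,m)=1$ by construction. Equivalently: killing the $Q$-contribution forces $(f,N)=(g,N)=0$ (by linear independence of $f,g$), and then two integrations by parts give $(\omega,N'')=(\omega'',N)=0$, using $\omega(\pm1)=0$ and $\omega'(\pm1)=0$ in the case $\kappa=0$; so $N''$ can never equal $\pm m$. The obstruction is structural: $\gamma_\eps\neq0$ occurs exactly in the branches with $\kappa=0$ (A1 with $\kappa=0$, and A3 with $\kappa=0$, $a_1=a_2=0$), where $\omega$ is a genuine compactly supported zero-eigenfunction of $B$, and the component of $h(\eps\,\cdot)$ along $\omega$ cannot be absorbed by any corrector localized in $(-\eps,\eps)$. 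To your credit, the difficulty you isolated is real and is not visibly resolved in the paper either: in its displayed estimate for $\|r_\eps\|$ the term $\gamma_\eps m(\eps^{-1}\cdot)$ disappears between the first and second inequality without justification, so the naive volume counting covers only the cases $\gamma_\eps=0$, where your argument (identical to the paper's) is complete. A genuine repair must handle the $\omega$-component of the data by a different mechanism --- for instance by estimating $(H_\eps-\zeta)^{-1}\bigl[m\xe\bigr]$ directly and showing it is $O(\eps)$ --- rather than by adding inner correctors, which Remark~\ref{RemarkOnSolvability} rules out.
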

\begin{proof}
  First we record some estimates on $y$, $u$, $v$ and $\vartheta_\eps$.
We  observe that $(\mathcal{H}-\zeta)^{-1}$ is a bounded operator from~$L_2(\Real)$ to the domain of $\mathcal{H}$ equipped with the
graph norm, and  the domain is a subspace of $W_{2,loc}^2(\Real\setminus \{0\})$. Therefore we have
\begin{equation*}
\|y\|_{W_2^2(-a,0)}+\|y\|_{W_2^2(0,a)}\leq c_1\|h\|
\end{equation*}
for any $a>0$, and thus
\begin{equation}\label{EstY}
    \|y\|_{C^1(-a,0)}+\|y\|_{C^1(0,a)}\leq c_2\|h\|,
\end{equation}
by the Sobolev embedding theorem.
 In particular, we  have
\begin{equation*}
|y_-|+|y_+|\leq c_3\|h\|.
\end{equation*}
It follows from \eqref{VInCaseKappaNotZero} and the last bound that
\begin{equation}\label{EsyV}
  \|u\|_{L_2(\cI)}\leq c_4\big(|y_-|+|y_+|\big)
  \leq c_5 \|h\|.
\end{equation}
Using the bound \eqref{NHPUestimate} along with \eqref{EsyV}, we estimate
\begin{equation}\label{EstV1}
  \|v\|_{W_2^2(\cI)}\leq c_6(|y_-|+|y_+|+\|qu\|_{L_2(\cI)})\leq c_7\|h\|,
\end{equation}
since the potential $q$ is bounded. To estimate $\vartheta_\eps$, we apply  \eqref{NHPUestimate} to problem \eqref{C2ProblemVe1}
\begin{equation}\label{EstVeps}
   \|\vartheta_\eps\|_{W_2^2(\cI)}\leq c_{8}(|\alpha_\eps(h)|+|\beta_\eps(h)|+|\gamma_\eps(h)|
   +\|h(\eps\cdot)\|_{L_2(\cI)})\leq c_{9}\,\eps^{-1/2} \|h\|,
\end{equation}
where we  used \eqref{EstAlphaBetaGamma}.
Hence  inequalities \eqref{EsyV},  \eqref{EstV1} and \eqref{EstVeps} provide the bound
\begin{multline}\label{EstHatY}
  \|Y_\eps(\eps^{-1}\cdot)\|_{L_2(-\eps,\eps)}= \eps^{1/2}\|Y_\eps\|_{L_2(\cI)}
  =\eps^{1/2}\|u+\eps v+\eps^2 \vartheta_\eps\|_{L_2(\cI)}
  \\
  \leq
  \eps^{1/2}\|u\|_{L_2(\cI)}+\eps^{3/2} \|v\|_{L_2(\cI)}+\eps^{5/2} \|\vartheta_\eps\|_{L_2(\cI)}\leq  c_{10}\eps^{1/2} \|h\|.
\end{multline}

In order to estimate $\rho_\eps$ we calculate the jumps of $w_\eps$. Recalling that $v(-1)=0$ and $\vartheta_\eps(-1)=0$ for all cases, we have
\begin{equation}\label{C2Jumps}
\begin{aligned}
  &[w_\eps]_{-\eps}=y_--y(-\eps),&& [w_\eps]_{\eps}=y(\eps)-y_++\eps v(1)+\eps^2\vartheta_\eps(1),\\
  &[w_\eps']_{-\eps}=y'_--y'(-\eps)+\eps \alpha_\eps,&& [w_\eps']_{\eps}=y'(\eps)-y'_+-\eps \beta_\eps.
\end{aligned}
\end{equation}
There exists a constant  being independent of $\eps$ and $y$ such that
\begin{equation}\label{EstY(Eps)-Y(0)}
\big|y^{(k)}(-\eps)-y^{(k)}_-\big|+
\big|y^{(k)}(\eps)-y^{(k)}_+\bigl|\leq C\eps^{1/2}\|h\|
\end{equation}
for $k=0,1$, since
\begin{equation*}
\bigr|y^{(k)}(\pm\eps)-y^{(k)}_\pm\bigl|\leq \left|\int_0^{\pm\eps}|y^{(k+1)}(x)|\,dx\right|
   \leq C\eps^{1/2} \|y\|_{W_2^2((-1,1)\setminus \{0\})}.
\end{equation*}
Then utilizing estimate \eqref{REst} in Proposition~\ref{PropW22Corrector}  (with $\rho_\eps$ and $w_\eps$ in place of $\rho$ and $w$, respectively) we obtain the bound
\begin{multline}\label{EstRhoEps}
  |\rho_\eps(x)|+|\rho_\eps''(x)|
  \leq
  c_{11}\Big(
  |y(-\eps)-y_-|+ |y(\eps)-y_+|+|y'(-\eps) -y'_-|\\+|y'(\eps)-y'_+|
  +\eps \big(|v(1)|+|\alpha_\eps|+|\beta_\eps|\big)+\eps^2|\vartheta_\eps(1)|
  \Big)
  \leq
  c_{12}\eps^{1/2}\|h\|
\end{multline}
for $|x|\geq \eps$, in view of \eqref{EstAlphaBetaGamma}, \eqref{EstV1}, \eqref{EstVeps} and \eqref{EstY(Eps)-Y(0)}.
Using this bounds along with \eqref{EstHatY}, we estimate
\begin{multline*}
  \|r_\eps\|\leq  c_{13} \big(\|\rho_\eps''+(V-\zeta)  \rho_\eps\|\\+\|q(\eps^{-1}\cdot) v(\eps^{-1}\cdot)+\gamma_\eps m(\eps^{-1}\cdot)+(V-\zeta)  Y_\eps(\eps^{-1}\cdot)\|_{L_2(-\eps,\eps)}\big)\\
  \leq c_{14}\max\limits_{|x|>\eps}(|\rho_\eps|+|\rho_\eps''|)+c_{15} \eps^{1/2}\big(\|v\|_{L_2(\cI)}+\|Y_\eps\|_{L_2(\cI)}\big)\leq c_{16}\eps^{1/2}\|h\|
\end{multline*}
as desired.
We still have to  estimate
\begin{equation*}
 s_\eps(x)=
  \begin{cases}
      \rho_\eps(x)   & \text{if }|x|>\eps,\\
      u\xe+\eps v\xe+\eps^2 \vartheta_\eps\xe-y(x)& \text{if }  |x|<\eps.
  \end{cases}
\end{equation*}
We can as before invoke \eqref{EstY}, \eqref{EstHatY} and \eqref{EstRhoEps} to derive the bound
\begin{multline*}
   \|s_\eps\|\leq c_1\big(\|\rho_\eps\|+\|Y_\eps(\eps^{-1}\cdot)\|_{L_2(-\eps,\eps)}+\|y\|_{L_2(-\eps,\eps)}\big)\\
   \leq c_2\eps^{1/2}(\|h\|+\max_{|x|\leq \eps}|y(x)|)\leq c_3\eps^{1/2}\|h\|,
\end{multline*}
which completes the proof of the proposition.
\end{proof}

\subsection{End of the proof}
Recall that $y_\eps=(H_\eps-\zeta)^{-1}h$ and $y=(\mathcal{H}-\zeta)^{-1}h$ for given $h\in L_2(\Real)$ and a complex number $\zeta$ with non-zero imaginary part.
By definition of $r_\eps$ and $s_\eps$ we have $(H_\eps-\zeta)Y_\eps=h+r_\eps$ and $Y_\eps=(\mathcal{H}-\zeta)^{-1}h+s_\eps$. We conclude from this that
$(H_\eps-\zeta)^{-1}h=Y_\eps-(H_\eps-\zeta)^{-1}r_\eps$ and $(\mathcal{H}-\zeta)^{-1}h=Y_\eps-s_\eps$,
hence that
\begin{multline*}
    \|(H_\eps-\zeta)^{-1}h-(\mathcal{H}-\zeta)^{-1}h\|=
    \|s_\eps-(H_\eps-\zeta)^{-1}r_\eps\|\\
    \leq\|s_\eps\|+ \|(H_\eps-\zeta)^{-1}\|\, \|r_\eps\|
    \leq\|s_\eps\|+ |\Im\zeta|^{-1}\|r_\eps\|  \leq C \eps^{1/2}\|h\|,
\end{multline*}
in view of Proposition~\ref{PropRepsSeps}. The last bound establishes
the norm resolvent convergence of  $H_\eps$ to the operator $\mathcal{H}$, which is the desired conclusion.

\section{Proof of Theorem~\ref{ThmS}}

\subsection{Case B1}
We begin from  the case in which operator $B$ possesses two linearly independent half-bound states.
We  assume that $f_0=g_0=0$, $\pi =0$ and $\kappa\neq 0$. But suppose now instead of $a_2\neq\overline{\kappa} a_1$,  as in the case A1, that the equality  $a_2=\overline{\kappa} a_1$ holds (see the graph in Fig.~\ref{FigBranchingAlgorithm}). Starting the proof as in \ref{Subsect41}, we look for uniform approximation $Y_\eps$ in the form \eqref{YepsA1KappaNot0} to a solution of  equation $(H_\eps-\zeta)y_\eps=h$.
In this case, we first see the difference  in matrix condition \eqref{MatrixEquality}, because $a_2=\overline{\kappa} a_1$ and therefore the matrix on the left is now degenerate.
Since $\kappa a_2=|\kappa|^2 a_1$, \eqref{MatrixEquality} can be written in the form
\begin{equation*}
  \begin{pmatrix}
    \frac{a_2}{|\kappa|^2} & -1\\
    \frac{a_2}{|\kappa|^2} & -1\\
  \end{pmatrix}
  \begin{pmatrix}
    y_+\\ y'_+
  \end{pmatrix}=
  \begin{pmatrix}
    \frac{a_2}{|\kappa|^2}-a_0 & -1\\
    0 & 0\\
  \end{pmatrix}
  \begin{pmatrix}
    y_-\\ y'_-
  \end{pmatrix}.
\end{equation*}
It follows immediately from this that
\begin{equation}\label{CaseB1conds}
y'_--\left(|\kappa|^{-2}a_2-a_0\right)y_-=0, \qquad
y'_+-|\kappa|^{-2}a_2y_+=0.
\end{equation}
Hence we introduce the limit operator $\mathcal{H}$ as the  Schr\"{o}dinger operator on the line acting via $\mathcal{H}\psi=-\psi''+V\psi$ on the domain
\begin{equation*}
 \dom \mathcal{H}=\left\{ \psi \in \mathcal{V} \colon \psi'(-0)=\theta_1 \psi(-0), \quad \psi'(+0)=\theta_2 \psi(+0)\right\},
\end{equation*}
where $\theta_1=|\kappa|^{-2}a_2-a_0$ and $\theta_2=|\kappa|^{-2}a_2$.
Thus $\mathcal{H}=\mathcal{R}_-(\theta_1)\oplus \mathcal{R}_+(\theta_2)$.

Turning to approximation $Y_\eps$, we assume that  $y=(\mathcal{H}-\zeta)^{-1}h$ is a $L_2(\Real)$-function solving  the equation $-y''+(V-\zeta) y=h$, subject to  coupling conditions \eqref{CaseB1conds}.
Next, $u$ is  a half-bound state given by \eqref{VInCaseKappaNotZero}, $v$ and $\vartheta_\eps$ are solutions to problems \eqref{bvpV1} and \eqref{C2ProblemVe1} respectively such that $v(\pm1)=0$ and $\vartheta_\eps(\pm1)=0$, by Proposition~\ref{PropSolvabilityOfNHP}. The jumps $[w_\eps]_{\pm\eps}$ and $[w_\eps']_{\pm\eps}$ given by \eqref{C2Jumps} are small as $\eps\to 0$ uniformly on $h\in L_2(\Real)$. Hence there exists a small corrector $\rho_\eps$ satisfying estimate \eqref{EstRhoEps} such that   $Y_\eps\in \dom H_\eps$. In addition, $Y_\eps$ satisfies the equation $(H_\eps-\zeta)Y_\eps=h+r_\eps$ with the remainder $r_\eps$ that can be estimated as in Proposition~\ref{PropRepsSeps}.
By the argument used at the  end of the proof of Theorem~\ref{ThmC}, we show that  $H_\eps$ converge to $\mathcal{R}_-(\theta_1)\oplus \mathcal{R}_+(\theta_2)$ as $\eps\to 0$ in the norm resolvent sense.

\subsection{Case B2} Suppose that   $\lambda=0$, $f_0g_0\neq 0$, $f_0g_1\neq f_1g_0$ and  $\sigma_- \sigma_+=0$. According to Lemma~\ref{LemmaHBS} \textit{(i)}, operator  $B$ possesses  half-bound state $\sigma$.
Looking for  approximation $Y_\eps$, we  set $u=c_0\sigma$ with some constant $c_0$. As in the case A2, we have
\begin{equation}\label{y=cSigma}
 y_-=c_0\sigma_-, \quad y_+=c_0\sigma_+, \quad \bar{\sigma}_+y'_+-\bar{\sigma}_-y'_-=c_0\sigma_*.
\end{equation}
In view of Proposition~\ref{PropSigmaPrps} \textit{(ii)} only one of the values $\sigma_-$ and $\sigma_+$ is equal to zero. Assume for instance $\sigma_-=0$.
Then we immediately deduce $y_-=0$, $c_0=\sigma_+^{-1}y_+$ and therefore
$y'_+=\sigma_+^{-2}\sigma_*y_+$. Hence in this case the limit operator
$\mathcal{H}$ as $\eps\to 0$ is the direct sum $\mathcal{D}_-\oplus \mathcal{R}_+(\theta)$, where $\theta=\sigma_+^{-2}\sigma_*$.

Let now  $y=(\mathcal{H}-\zeta)^{-1}h$.  Since a solution $v$ of \eqref{bvpV1} is defined up to term $c_1\sigma$ and $\sigma(1)=\sigma_+\neq 0$, we can find a unique solution $v$ such that $v(1)=0$.
We also assume that $\vartheta_\eps$ solves the  problem
\begin{equation*}
  -\vartheta_\eps ''+Q \vartheta_\eps=h(\eps \,\cdot),\quad t\in \cI,\qquad
   \vartheta'_\eps(-1)=0,\quad \vartheta'_\eps(1)=\beta_\eps,
\end{equation*}
where $\beta_\eps=-\sigma_+^{-1}\,(\sigma, h(\eps \,\cdot))$.
The problem admits a solution such that $\vartheta_\eps(1)=0$.
Thus  we built approximation $Y_\eps\in\dom H_\eps$ and
the rest of the proof is word for word as in the proof of the previous theorem. The subcase $\sigma_+=0$ is treated similarly.

\subsection{Case B3}
This case collects all the subcases, in which the limit operator is the direct sum $\mathcal{D}_-\oplus \mathcal{D}_+$ of the unperturbed half-line Schr\"{o}dinger operators with potential $V$,  subject to the Dirichlet boundary condition at the origin.
In fact, if $\lambda\neq 0$, then
operator $B$ has no zero-energy resonance, i.e., problem \eqref{bvpV0} admits a trivial solution $u=0$ only. In view of coupling conditions \eqref{CouplingYV0}, it immediately follows that $y_-=0$ and $y_+=0$.
If  $f_0g_0\neq 0$, $f_0g_1=f_1g_0$, $\sigma_-=0$ and $\sigma_+=0$, then \eqref{y=cSigma} also implies  $y_-=0$ and $y_+=0$. Finally, in the case $f_0=0$, $g_0=0$, $\pi=0$, $\kappa=0$, $a_2=0$ and $a_1\neq0$, it follows from the second condition in \eqref{SolvCondsKappaZero} that $y(0)=0$.
The same proof, as in the previous cases, works in the case B3.

\medskip

The proof of Theorem~\ref{Thm3} is actually contained in the proofs of Theorem~\ref{ThmC} and Theorem~\ref{ThmS}. Estimate \eqref{MainEst} immediately follows from Proposition~\ref{PropRepsSeps}. The order of convergence is optimal, because the estimate   $\|h(\eps\cdot)\|_{L_2(\cI)}\leq c\,\eps^{-1/2} \|h\|$
in the proof of Proposition~\ref{PropRepsSeps} is precise and cannot be improved for $L_2$-functions.

\bigskip

\emph{Acknowledgements.}
I thank the anonymous Referee for careful reading of the manuscript and
valuable remarks and suggestions; the  first version of the paper was significantly improved by these suggestions.

\bibliographystyle{amsplain}

\end{document}